\renewcommand{\hat}{\widehat}
\mathchardef\mhyphen="2D
\tikzset{>=stealth',
	cvertex/.style={circle,draw=black,inner sep=1pt,outer sep=3pt},
	vertex/.style={circle,fill=black,inner sep=1pt,outer sep=3pt},
	star/.style={circle,fill=yellow,inner sep=0.75pt,outer sep=0.75pt},
	tvertex/.style={inner sep=1pt,font=\scriptsize},
	gap/.style={inner sep=0.5pt,fill=white}}
\title{Moduli spaces of framed sheaves on compactified Kleinian singularities}
\author{Søren Gammelgaard}             
\address{Scuola Internazionale Superiore di Studi Avanzati,
	Via Bonomea 265,
	34136 Trieste,
	Italy}
\email{sgammelg@sissa.it}
\begin{document}
		
	\setcounter{tocdepth}{1}

\begin{abstract}
Consider a Kleinian singularity $ \C^2/\Gamma $, where $ \Gamma $ is a finite subgroup of $ \SL_2(\C) $. In this paper, we construct moduli spaces of framed sheaves on a projective Deligne-Mumford stack compactifying the singularity, and we show that these moduli spaces are quasiprojective schemes.
\end{abstract}          
\maketitle                  

\tableofcontents            
		\section{Introduction}\label{sec:ModSpaceIntro}

Let $\Gamma$ be a finite subgroup of $\SL_2(\C)$. Then $\Gamma$ has a tautological action on the affine plane $\C^2 = \Spec \C[x,y]$, and the quotient $\C^2/\Gamma = \Spec \C[x,y]^\Gamma$ is a Kleinian singularity. In previous work (\cite{CGGS}\footnote{Our proof of the main theorem of \cite{CGGS} had a flaw, which was fixed in \cite{CrawYama}.}) we studied the Hilbert schemes $\Hilb^n(\C^2/\Gamma)$, which parametrise $0$-dimensional subschemes of $\C^2/\Gamma$ of length $n$. We can identify such a subscheme with its corresponding ideal of $\C[x,y]$. As a sheaf, such an ideal is coherent, torsion-free, and has rank $1$. It is then a natural question to consider generalising $\Hilb^n(\C^2/\Gamma)$  to cover moduli spaces of coherent, torsion-free sheaves of arbitrary positive rank. One way of doing this is to consider moduli of \emph{framed sheaves}, introduced in \cite{HuybrechtsLehnModuli}. Such a framed sheaf is a pair $(\shf F, \phi_{\shf F})$, where $\shf F$ is a coherent sheaf, and $\phi_{\shf F}$ is a morphism (the \emph{framing morphism}) from $\shf F$ to some fixed \emph{framing sheaf} $\shf G$. 

In \cite{Nakajimabook}, Nakajima considered the moduli spaces of framed sheaves $(\shf F, \phi_{\shf F}$) on $\P^2$ satisfying certain numerical conditions, and showed that they were examples of (Nakajima) quiver varieties. In this case, the framing morphism is an isomorphism $\phi_{\shf F}\colon \shf F|_L\isoto \OO_L^{\oplus r}$, for a line $L\subset \P^2$. This was extended by Varagnolo and Vasserot in \cite{VaVa}, who extended the mentioned action of $\Gamma$ on $\C^2$ to an action on $\P^2$, and considered similar moduli spaces of framed $\Gamma$-\emph{equivariant} sheaves on $\P^2$. In either case, it is easy to see that when the framed sheaves have rank 1, they can be interpreted as ideal sheaves on $\P^2\setminus L = \C^2$. We wish to adapt this strategy to $\C^2/\Gamma$. Paralleling Nakajima and Varagnolo and Vasserot, we will compactify the singularity $\C^2/\Gamma$ to a space $\mathcal X$, and consider the moduli spaces of framed sheaves on $\mathcal X$. The quotient morphism $\C^2\to \C^2/\Gamma$ will extend to a morphism $\pi\colon\P^2\to \mathcal X$.

This compactification $\mathcal{X}$, introduced in \cite{Paper1} (though a similar construction appears in \cite{NakALE}), is however not a variety, but a projective Deligne-Mumford stack. 
It has a distinguished (stacky) divisor $D$, the image of $L$ under $\pi$. We thus wish to construct moduli spaces of framed sheaves $(\shf F, \phi_{\shf F})$ on $\mathcal{X}$, such that $\phi_{\shf F}$ is an isomorphism between $\shf F|_D$ and a fixed coherent sheaf $\shf R$ on $D$.
Providing such a construction is the goal of this paper. Through adapting results of Bruzzo and Sala \cite{BruzzoSala}, we show the following theorem (see \cref{thm:main} for more detail):

\begin{theorem}\label{thm:mainintro}
	Choose a nonnegative integer $n$ and a finite-dimensional $\Gamma$-representation $V$, and let $\shf R$ be the sheaf on $D$ such that $(\pi|_D)^* \shf R =(\OO_L\otimes V)$.

There is a fine moduli space $ \mathbf{Y}_{\mathbf r,n} $ parametrising isomorphism classes of pairs $(\shf F, \phi_{\shf F})$ where $\shf F$ is a coherent, torsion-free sheaf of rank $\dim V$,
 $\phi_{\shf F}$ is an isomorphism $\shf F|_{D}\isoto \shf R$, and $\shf F$ satisfies \mbox{$ \dim H^1(\mathcal X, \shf F\otimes \shf I_{D})= n $}, where $ \shf I_{D} $ is the ideal sheaf of $ D\subset \mathcal X $,  Moreover, $\mathbf{Y}_{\mathbf r,n}$ is a quasiprojective scheme. 
\end{theorem}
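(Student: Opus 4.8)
The plan is to realise $\mathbf{Y}_{\mathbf r, n}$ by transporting the general machinery of moduli of framed sheaves on polarised projective Deligne--Mumford stacks, developed by Bruzzo and Sala in \cite{BruzzoSala}, to the specific compactification $\mathcal X$ of \cite{Paper1}. First I would record that $\mathcal X$ is a projective DM stack equipped with a polarisation and a generating sheaf, so that every coherent sheaf on $\mathcal X$ has a well-defined modified Hilbert polynomial; this is the input the abstract theory requires. I would then check that the pair $(D,\shf R)$ is admissible as framing data in the sense of \cite{BruzzoSala}: that $D$ is a good framing divisor --- it is the reduced effective stacky divisor $\pi(L)$ and carries the positivity their construction demands --- and that $\shf R$ is a good framing sheaf, which should follow from the fact that $(\pi|_D)^*\shf R = \OO_L\otimes V$ is locally free. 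Granting this, a framing $\phi_{\shf F}\colon \shf F|_D \isoto \shf R$ rigidifies the pair $(\shf F,\phi_{\shf F})$: an automorphism of $\shf F$ compatible with $\phi_{\shf F}$ must restrict to the identity on $D$, which under their hypotheses forces it to be trivial, so the moduli problem is expected to be fine rather than merely coarse.

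The next step is to match the intrinsic invariants appearing in the statement with the discrete class $\mathbf r$ that the general theory fixes. Here I would argue that fixing the rank $\dim V$, the framing isomorphism $\shf F|_D \cong \shf R$, and the number $n = \dim H^1(\mathcal X, \shf F\otimes \shf I_D)$ pins down the full Chern character (equivalently the modified Hilbert polynomial) of $\shf F$. The framing determines $\shf F|_D$ and hence the part of $\mathrm{ch}(\shf F)$ detected along $D$, torsion-freeness together with the rank fixes the remaining low-degree terms, and a Riemann--Roch computation on $\mathcal X$ converts the single number $n$ into the top term. This identifies the locus cut out by the conditions of the theorem with the moduli functor of $(D,\shf R)$-framed sheaves of a fixed class $\mathbf r$, and shows that those conditions carve out a single quasiprojective piece rather than a union over varying Hilbert polynomials.

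With the data in place, the construction follows the Geometric Invariant Theory strategy of \cite{HuybrechtsLehnModuli} adapted to stacks. I would first establish boundedness of the family of torsion-free framed sheaves with invariants $(\mathbf r,n)$, using the framing to obtain a uniform bound; I would then present every such sheaf as a quotient of a fixed locally free sheaf, producing a locally closed subscheme of a suitable $\Quot$ scheme on which the framing condition is imposed. The stability built into the framing makes the relevant group action free with closed orbits, so the geometric quotient exists and represents the functor; quasiprojectivity of $\mathbf{Y}_{\mathbf r,n}$ is inherited from the quotient.

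The main obstacle, as I see it, is not the formal descent through GIT but the verification that $\mathcal X$ and the framing data genuinely satisfy the hypotheses of \cite{BruzzoSala} despite the stacky structure and the underlying singularity: proving boundedness and the openness of torsion-freeness in this setting, and ensuring that $\shf R$ controls $\shf F$ strongly enough to force the stability needed for a geometric quotient. The Riemann--Roch bookkeeping that turns $\dim H^1(\mathcal X,\shf F\otimes \shf I_D)$ into a fixed characteristic class is delicate on a DM stack, and is where I would expect the computation to require the most care.
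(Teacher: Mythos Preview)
Your outline has the right shape, but there is a genuine gap in how you plan to invoke \cite{BruzzoSala}. Their theorem on $(\mathcal D,\shf R)$-framed sheaves (their Section 5) requires the ambient stack to be \emph{smooth} of dimension two, and $\mathcal X$ is singular at $p$. So the step ``check that $(D,\shf R)$ is admissible as framing data in the sense of \cite{BruzzoSala}'' cannot succeed as stated; the paper explicitly notes this discrepancy after \cref{def:FramedAlongDivisor}. The paper's workaround is to use only the more primitive \cref{thm:ModSpaceExists} (their Theorem 4.15), which constructs moduli of $\delta$-stable $i_*\shf R$-framed sheaves with a fixed modified Hilbert polynomial and imposes no smoothness hypothesis. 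One then carves out $\mathbf Y_{\mathbf r,n}$ inside this larger scheme as the open locus where the sheaf is torsion-free and the framing morphism restricts to an isomorphism along $D$; both conditions are shown to be open directly.

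This change of target has two consequences you do not anticipate. First, you must now \emph{prove} $\hat\mu$-stability rather than receive it from a ``good framing divisor'' hypothesis; the paper does this in \cref{lem:sheavesInYrnAreStable} via the stacky Grothendieck lemma, after establishing a uniform regularity bound. Second, the bookkeeping you propose---computing the modified Hilbert polynomial by Riemann--Roch on $\mathcal X$---is unavailable on a singular stack. The paper instead transports everything to $\P^2$ via the functor $\pi^T$ and computes with equivariant Euler characteristics there (\cref{prop:sameEquivHilbFunct}, \cref{cor:sameHilbPoly}). Boundedness is similarly obtained on $\P^2$, and is not as soft as you suggest: one needs to know that only finitely many equivariant invariants $\mathbf v$ with $v_0=n$ can occur for $\pi^T\shf F$, and the paper obtains this from a nonemptiness criterion for Nakajima quiver varieties (\cref{lem:boundedHilbPolys}). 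Without that input, your proposed uniform bound from the framing alone does not obviously control the full $\Gamma$-representation $H^1(\P^2,\pi^T\shf F(-L))$.
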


This work was originally motivated by a desire to interpret a class of Nakajima quiver varieties, generalising those appearing in \cite{CGGS}. Such quiver varieties do not play a major role in this paper, however, only being necessary for the proof of \cref{lem:boundedHilbPolys}. We do note (\cref{rmk:quiverbij}) that in some cases there is a bijection of the $\C$-points of $Y_{\mathbf r, n}$ with the closed points of a particular quiver variety $\mathfrak{M}_{\theta_0}(\mathbf r_0,n\delta)$ investigated in \cite{Paper1}, but we remain for now unable to determine whether this bijection can be extended to an isomorphism of schemes.

This paper relies extensively on \cite{Paper1} for much of its preliminaries. Like it, this paper heavily adapts, and extends, results of my DPhil Thesis \cite{ID}. Especially, I expand the arguments to consider a larger class of framed sheaves than those considered in \cite{ID}, and in some places I have adapted the proofs to reduce the reliance on quiver varieties.

The first section of this paper contains some of the background information we will need. It does not contain any original material, and as mentioned we refer to \cite{Paper1} for many results. Others are adapted from \cite{BruzzoSala}, and this we write out in some more detail. The second, shorter, section describes the projective stack $\mathcal X$ compactifying $\C^2/\Gamma$, and introduces functors taking framed sheaves on $\mathcal X$ to framed sheaves on $\P^2$ and vice versa. 

In the final section, containing the main mathematical content, we detail the construction of the correct moduli space. The main idea is to use a result of Bruzzo and Sala on the existence of moduli spaces of framed sheaves on projective stacks, furthermore stating that these moduli spaces are quasiprojective schemes, however it is not trivial to match our moduli problem to their result. We handle this by studying the morphism $\pi$, and using its properties to compare framed sheaves on $\mathcal{X}$ with equivariant framed sheaves on $\P^2$. We conclude by showing that the moduli space constructed in \cref{thm:mainintro} indeed generalises $\Hilb^n (\C^2/\Gamma)$.

Ideally, we should be able to find a Theorem stating that the moduli spaces we construct in this paper are Nakajima quiver varieties. Such a result would show, for instance, that the moduli spaces $\mathbf Y_{\mathbf{r},n}$ have symplectic singularities and are irreducible. Furthermore, it would provide an approach to determine the generating series for their Euler characteristics, extending the work already done by Nakajima (\cite{Nak20}) to find the generating series for the Euler characteristics of $\Hilb^n(\C^2/\Gamma)$.

\subsubsection*{Acknowledgments}
The basis of this work is my DPhil thesis, written under the supervision of Balázs Szendr\H{o}i, whom I thank for his encouragement and assistance. I also thank Ugo Bruzzo for reading and commenting on a preliminary version of this paper, and together with Andrea Ricolfi for several conversations on this subject. I was supported by an Aker Scholarship and by a SISSA Mathematical Fellowship.

\subsubsection*{Conventions}
We work throughout over $\C$, the field of complex numbers. Given a group $G$ acting on a scheme (or stack) $X$, $X/G$ denotes the scheme-theoretic quotient if it exists, and $[X/G] $-denotes the stack-theoretic quotient. We write $\Coh{X}$ for the category of coherent sheaves on a scheme (or stack) $X$, and $\ECoh{G}{X}$ for the category of $G$-equivariant sheaves.

		\section{Preliminaries}

\subsection{Coherent sheaves on projective Deligne-Mumford stacks}\label{sec:CohSheavesOnStacks}
We refer to \cite[Sections 2.6-7]{Paper1} (based on \cite{BruzzoSala}, \cite{Nironi}, \cite{Kresch}) for background information and our notation on Deligne-Mumford stacks.

For this section, let $ \mathcal Y $ be a Deligne-Mumford stack, with a coarse moduli space $ f\colon \mathcal Y\to Y $.\todo{space or scheme?}
We use the following definition of a projective Deligne-Mumford stack:
\begin{definition}[{\cite[Corollary 5.4, Definition 5.5]{Kresch}}]
	The stack $ \mathcal Y $ is \emph{projective} if the following two conditions are satisfied:
	\begin{itemize}
		\item $ Y $ is projective,
		\item there is a locally free coherent sheaf $ \shf{V} $ on $ \mathcal{Y} $ such that for any point $x\colon \Spec k\to \mathcal Y $ with $ k $ algebraically closed, the action of the stabiliser group of $ x $ on the fibre $ x^*\shf{V} $ is faithful.
	\end{itemize}
	Such a sheaf $ \shf{V}$ is called a \emph{generating sheaf} for $ \mathcal{Y} $. (See \cite[10]{Kresch} for an explanation of the terminology.)
\end{definition}

\begin{remark}\label{rem:GenSheaf}
	There are other equivalent definitions of what a projective stack is. For instance, we could replace the requirement that there exists a generating sheaf with the requirement that $ \mathcal{Y} $ is a global quotient stack
	(see \cite[Corollary 5.4, Definition 5.5]{Kresch}). 
\end{remark}


For the rest of this section, we additionally assume that $\mathcal Y$ is projective. We also fix an ample sheaf $ \OO_Y(1) $ of $ \OO_Y $-modules, and a generating sheaf $ \shf {V} $ for $ \mathcal{Y} $. (The pair $ (\OO_{Y}(1), \shf{V}) $ is sometimes (e.g. in \cite{BruzzoSala}) called a \emph{polarisation} of the Deligne-Mumford stack $ \mathcal{Y} $.)
\begin{definition}[{\cite[Definition 2.26]{BruzzoSala}}]
	Let $ \shf F $ be a coherent sheaf on a projective stack $ \mathcal{Y} $.  The \emph{modified} Hilbert polynomial of $ \shf F $ is the function $\Z\to \Z $ given by
	\[\nu\mapsto P_{\shf V}(\shf F, \nu) \coloneqq \chi(\mathcal{Y}, \shf F\otimes f^*\OO_Y(\nu)\otimes \shf V\dual) = \chi(Y, f_*(\shf F\otimes\shf V\dual)\otimes \OO_Y(\nu)).\]
	
	We will usually write \emph{Hilbert polynomial} to mean this modified Hilbert polynomial.
\end{definition}
By the last equality (which follows from the projection formula applied to $ f $), we see that  $ P_{\shf V}(\shf F, \nu)  $ is indeed a polynomial in $ \nu $.\todo{Notation on Hilbert polys doesn't match}
Let $ d \coloneqq \dim \Supp \shf F$, which by \cite[Proposition 2.22]{BruzzoSala} equals $ \dim \Supp f_*(\shf F\otimes\shf V\dual) $.
We shall write the Hilbert polynomial as \[  P_{\shf V}(\shf F, \nu)= \frac{\alpha_{\shf{V}, d}\shf F}{d!}\nu^d+\frac{\alpha_{\shf{V}, d-1}\shf F}{(d-1)!}\nu^{d-1}+\cdots + \alpha_{\shf V, 0}.\]

\begin{definition}[{\cite[Definition 4.2]{Nironi}}]\label{def:CastelnuovoMumford} Let $m$ be an integer.
	We say that a coherent sheaf $ \shf F $ on $ \mathcal Y $ is $ m $-regular if $ f_*(\shf F\otimes \shf V\dual) $ is $ m $-regular \ie
	\[  H^i(\mathcal Y, \shf F\otimes \shf V\dual\otimes f^*\OO_Y(m-i))= H^i(Y, f_*(\shf F\otimes \shf V\dual)(m-i))=0 \] for all $ i>0 $.
	The \emph{modified} Castelnuovo-Mumford regularity of $ \shf F $ is the smallest integer $ m $ such that $ \shf F $ is $ m $-regular.
\end{definition}
	We will often write \emph{regularity} to mean modified Castelnuovo-Mumford regularity.

\begin{remark}
	It may seem strange to require the generating sheaf in order to define a Hilbert polynomial and the regularity of $ \shf F $- why not simply consider the Hilbert polynomial $n\mapsto  \chi(\shf F\otimes f^*\OO(n)) $, where $ \OO(1) $ is an ample sheaf on $ Y $? 
	
	The reason is that such a definition would have undesirable properties. For instance, in the case where $ \mathcal{Y} = [\Spec \C/G] $ \ie the classifying space of $ G $, with $ G $ some finite group, the coarse moduli space is simply $ \Spec \C $. In this case, defining the Hilbert polynomial in the naïve way above for a sheaf $ \shf F $ on $ [\Spec \C/G] $, we would get a polynomial only depending on the $ G $-invariants of $ \shf F $.
	
	For this reason, the existence of the generating sheaf -- and especially the properties of the functor $ F_{\shf V}(-)\colon \Coh{\mathcal{Y}}\to\Coh{Y} $ given by $ \shf F\mapsto \shf V\dual\otimes \shf F $ -- play a crucial role in the construction of moduli spaces of sheaves on projective stacks. (See \cite{Nironi} for more on the generating sheaf, and \cite{BruzzoSala}, especially sections 3 and 4 in the latter.)
\end{remark}

\subsection{Properties of coherent sheaves on a projective Deligne-Mumford stack}
Following \cite[Definition 2.18]{BruzzoSala}, we make the following definition:
\begin{definition}\label{def:torfreestack}
	A coherent sheaf $ \shf F $ on a projective stack $ \mathcal Y $ is \emph{torsion-free} if all its nonzero subsheaves are of dimension $ \dim \mathcal Y $.
\end{definition}

We also need a notion of \emph{rank} for coherent sheaves on a projective stack $ \mathcal{Y} $. 
\begin{definition}[{\cite[p.11]{BruzzoSala}}]\label{def:rankOnStack}
	The \emph{rank} $ \rk \shf F $ of a coherent sheaf $ \shf F $ on $ \mathcal{Y} $ is the integer \[ \frac{\alpha_{\shf{V}, d}\shf F}{\alpha_d \OO_Y} = \frac{\alpha_{d}(f_*(\mathcal V\dual \otimes \shf F))}{\alpha_d \OO_Y},\] where $ \frac{\alpha_d \OO_Y}{d!}$ and $ \frac{\alpha_{d}(f_*(\mathcal V\dual \otimes \shf F))}{d!} $ are the leading coefficients of the respective ordinary Hilbert polynomials of $\OO_Y$ and $ f_*(\mathcal V\dual \otimes \shf F) $ on $ Y $.
\end{definition}

We shall, however, be using an alternative formulation easier to work with:
\begin{lemma}\label{lem:rankOnStackLocFree}
	A coherent sheaf $ \shf F $ of $ \OO_{\mathcal{Y}} $-modules has rank $ \rk \shf F = r $ if there is an open subscheme ${U}\injto \mathcal{Y} $ such that $ (\shf F\otimes \shf V\dual)|_{U} $ is locally free of rank $ r $.
\end{lemma}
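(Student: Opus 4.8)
The plan is to unwind Definition~\ref{def:rankOnStack} and reduce everything to a generic-rank computation on $Y$. Writing $\shf G\coloneqq \shf V\dual\otimes\shf F$, the quantity to compute is $\rk\shf F = \alpha_d(f_*\shf G)/\alpha_d(\OO_Y)$ with $d=\dim\mathcal Y=\dim Y$ (recall $f$ is finite, hence preserves dimension, and $\shf G$ has full support since it is locally free of positive rank on the dense open $U$). By the standard asymptotics of Hilbert polynomials of a coherent sheaf on a projective scheme (see e.g.\ \cite{HuybrechtsLehnModuli}), this ratio of leading coefficients is precisely the rank of $f_*\shf G$ at the generic point $\eta$ of $Y$. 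Thus it suffices to prove that $f_*\shf G$ has generic rank $r$. Here I use that $\mathcal Y$ is irreducible, as it is in our application, so that the given nonempty $U$ is automatically dense.

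First I would isolate a dense open of $Y$ over which the situation is transparent. Set $V_0\coloneqq Y\setminus f(\mathcal Y\setminus U)$. Since the coarse moduli map $f$ of a projective stack is proper, $f(\mathcal Y\setminus U)$ is closed and $V_0$ is open; moreover $\eta\in V_0$, because $\mathcal Y\setminus U$ has dimension strictly smaller than $\dim\mathcal Y=\dim Y$ and hence cannot dominate $Y$. By construction $f^{-1}(V_0)\subseteq U$, so $\shf G$ is locally free of rank $r$ on $f^{-1}(V_0)$.

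Next I would show that $f$ restricts to an isomorphism over $V_0$. The substack $f^{-1}(V_0)$ is open in $U$, hence is itself a scheme, so all of its stabiliser groups are trivial; since the coarse moduli map of $f^{-1}(V_0)$ is the restriction $f\colon f^{-1}(V_0)\to V_0$, and a scheme is its own coarse space, this restriction is an isomorphism. As pushforward commutes with restriction along the open immersion $V_0\injto Y$, we may identify $(f_*\shf G)|_{V_0}$ with $\shf G|_{f^{-1}(V_0)}$, which is locally free of rank $r$; in particular its fibre at $\eta$ has dimension $r$ over $\kappa(\eta)$, so $f_*\shf G$ has generic rank $r$ and $\rk\shf F=r$, as claimed.

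The main point to get right is the identification of the leading-coefficient ratio with the generic rank on $Y$, together with the claim that $f$ becomes an isomorphism after restricting over the dense open $V_0$; the latter is exactly where the hypothesis that $U$ is a \emph{scheme} (equivalently, that the stabilisers are trivial there) is essential, since without it $f$ would fail to be an isomorphism and the push-forward could change the rank. Everything else is formal manipulation of $f_*$ on the open $V_0$.
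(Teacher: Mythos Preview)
Your proof is correct and follows essentially the same approach as the paper's: identify the leading-coefficient ratio with the ordinary rank of $f_*(\shf V\dual\otimes\shf F)$ on $Y$, use that $f$ becomes an isomorphism over the schematic locus $U$, and read off the rank there. The paper's argument is terser and simply asserts that the restriction of $f$ to $U$ is an isomorphism; your passage to the saturated open $V_0=Y\setminus f(\mathcal Y\setminus U)$ is a cleaner way of making this precise (it guarantees $f^{-1}(V_0)\subseteq U$ and hence that the restricted map really is a coarse moduli map of a scheme), but this is a matter of rigour rather than a different strategy.
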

\begin{proof}
	
	We have $r =\frac{\alpha_{d}(f_*(\shf V\dual \otimes \shf F))}{\alpha_d \OO_Y}$, which by definition is the rank of the sheaf $ f_*(\shf V\dual \otimes \shf F) $ on $ Y $. The restriction of $f$ to $U$ is an isomorphism. It is then a standard computation that on the locus where the sheaf is locally free, it is locally free of rank $ r $.

\end{proof}
Of course, if $ \shf{V} $ is a line bundle, the rank of $ \shf F $ (defined by \cref{def:rankOnStack}) equals the rank of $ \shf F|_{U} $ as a locally free sheaf.

\subsection{Moduli spaces of framed sheaves on projective stacks}\label{sec:ModSpacesProjStacks}
The theory of moduli spaces of framed sheaves on projective Deligne-Mumford stacks has been developed in \cite{BruzzoSala}, we summarise the main points.

We will additionally assume that $ \mathcal Y $ is a normal \emph{irreducible} projective Deligne-Mumford stack, with a coarse moduli \emph{scheme} $ \tau\colon \mathcal Y \to Y $.

\begin{definition}[{\cite[Definitions 3.1, 3.3]{BruzzoSala}}]\label{def:FramedSheaf}
	Let $ \shf F $ be a quasi-coherent sheaf on $ \mathcal Y $. An \emph{$ \shf F $-framed} sheaf on $ \mathcal Y $ is a pair $ (\shf G, \phi_{\shf G}) $, where $ \shf G $ is a coherent sheaf on $ \mathcal Y $, and $ \phi_{\shf G}\colon \shf G\to \shf F $ is a sheaf homomorphism.
	
	A morphism of $ \shf F $-framed sheaves $ (\shf G, \phi_{\shf G})\to (\shf H, \phi_{\shf H}) $ consists of a sheaf morphism $ f\colon \shf G\to\shf H $ such that there is a $c\in \C^\times $ with $ \phi_{\shf G}\circ f=c\phi_{\shf H} $.
\end{definition}
A morphism $  f\colon (\shf G, \phi_{\shf G})\to (\shf H, \phi_{\shf H}) $ of $ \shf F $-framed sheaves is injective, respectively surjective, respectively an isomorphism, if $  f\colon \shf G\to \shf H $ is. 

\begin{definition}\label{def:FramedAlongDivisor}
	Let $ \mathcal D\subset \mathcal Y $ be a smooth integral closed substack of codimension 1, and let $\shf R$ be a locally free sheaf of $\OO_{\mathcal D}$-modules.
A \emph{$(\mathcal D, \shf R)$-framed sheaf} on $ \mathcal Y $ is a pair $ (\shf G, \phi_{\shf G}) $, where $ \shf G $ is a torsion-free sheaf of $\OO_Y$-modules, and $ \phi_{\shf G}\colon \shf G|_{\mathcal D }\isoto\shf R$ is an isomorphism.
\end{definition}
\begin{remark}
	Note that we differ from the definition in \cite[Definition 5.1]{BruzzoSala} on several points:
	\begin{itemize}
		\item We make no requirement on $\dim \mathcal Y $, and we do not require $ \mathcal Y $ to be smooth -- in fact, we will usually work with a singular $ \mathcal Y $;
		\item We do not make any assumptions on the coarse moduli space of $ \mathcal D $;
		\item We do not require $ \shf G $ to be locally free in a neighbourhood of $\mathcal{D}$ -- in the cases we consider, this property follows from the other conditions.\todo{check this one}
	\end{itemize}
\end{remark}
Choose a $ \shf F $-framed coherent sheaf $ (\shf G, \phi_{\shf G}) $ on $ \mathcal Y $, and set \[ \varepsilon(\phi_{\shf G})=\begin{cases}
	0& \textrm{ if }  \phi_{\shf G}=0 ,\\
	1& \textrm{ otherwise}.
\end{cases} \]
Now, let $ d = \dim\Supp \shf G $. Then fix a polynomial \[ \delta(\nu)= \delta_1\frac{\nu^{d-1}}{(d-1)!}+ \delta_2\frac{\nu^{d-2}}{(d-2)!}+\cdots \delta_d\in \Q[\nu] ,\] such that $ \delta_1>0 $.
Then (see \cite[Definition 3.6]{BruzzoSala},) $ \shf G $ is said to be \emph{(semi)stable} (with respect to $ \delta $) if:

\begin{enumerate}
	
	\item $  P_{\shf V}(\shf G', n)(\le)< \frac{\alpha_{\shf V, d}(\shf G')}{\alpha_{\shf V, d}(\shf G)}(P_{\shf V}(\shf G, n)-\varepsilon(\phi_{\shf G})\delta(\nu)) $ for all subsheaves $ \shf G'\subset \ker \phi_{\shf G} $,
	\item $P_{\shf V}(\shf G', n)-\delta(\nu)(\le)< \frac{\alpha_{\shf V, d}(\shf G')}{\alpha_{\shf V, d}(\shf G)}(P_{\shf V}(\shf G, n)-\varepsilon(\phi_{\shf G})\delta(\nu))$ for all subsheaves $ \shf G'\subset \shf G $.
\end{enumerate} Here the notation $ (\le)< $ means: For  \emph{stability}, one uses the equations with $ < $, and for \emph{semistability}, those with $ \le $.

It will turn out to be easier to work with a slightly modified kind of stability for sheaves, namely $ \hat{\mu} $-stability, which is a type of slope stability taking into account the framing sheaf. \todo{rewrite this for paper, maybe delete the original type of framing.}
\begin{definition}[{\cite[Definition 3.9]{BruzzoSala}}]\label{def:muHatStability}
	Let $ (\shf G, \phi_{\shf G}) $ be a $ \shf F $-framed sheaf of dimension $ d $ on $ \mathcal{Y} $, and let $ \delta_1 \in \Q$.
	Then $ \shf G $ is $ \hat{\mu} $-(semi)stable with respect to $ \delta_1 $ if the following conditions hold:
	
	\begin{enumerate}
		\item $ \ker \phi_{\shf G} $ is torsion-free,
		\item \[ \frac{\alpha_{\shf{V}, d-1}(\shf G')}{\alpha_{\shf{V}, d}(\shf G')} (\le)< \frac{\alpha_{\shf{V}, d-1}(\shf G)-\varepsilon(\phi_{\shf G})\delta_1}{\alpha_{\shf{V}, d}(\shf G)} \] for all $ \shf G'\subset \ker \phi_{\shf G} $, and
		\item \[ \frac{\alpha_{\shf{V}, d-1}(\shf G')-\delta_1}{\alpha_{\shf{V}, d}(\shf G')} (\le)< \frac{\alpha_{\shf{V}, d-1}(\shf G)\delta_1}{\alpha_{\shf{V}, d}(\shf G)} \] for all $ \shf G'\subset \shf G $ with $ \alpha_{\shf G', d}<\alpha_{\shf G, d} $,
	\end{enumerate}with the same convention as before regarding the notation $ (\le)< $.
	
	The number $ \hat{\mu}(\shf G, \phi_{\shf G})\coloneqq   \frac{\alpha_{\shf{V}, d}(\shf G)-\delta_1}{\alpha_{\shf{V}, d-1}(\shf G)}$ is called the \emph{framed hat-slope} of $ (\shf G, \phi_{\shf G}) $.
\end{definition}

Let $ \delta(\nu) \in \Q[\nu]$ be a polynomial such that $ \delta_1/(d-1)! $ is the degree $ (d-1) $-coefficient of $ \delta $. We then have~\cite[10]{BruzzoSala} a chain of implications for a sheaf $ \shf F $:
\begin{equation} \hat\mu \textrm{-stable}\implies \textrm{stable}\implies\textrm{semistable}\implies \hat{\mu}\textrm{-semistable},  \end{equation} 
where $ \hat{\mu} $-(semi)stability is considered with respect to $ \delta_1 $, (semi)stability with respect to $ \delta $.

\begin{remark}\label{rem:mustableIsStable}
	Especially, given a flat family of stable sheaves on $ \mathcal Y $, being $ \hat\mu $-stable is an \emph{open condition}, by the same argument as in \cite[Proposition 3.A.1]{HuybrechtsLehn}. (This means: If $ \shf G $ is a sheaf on $ \mathcal Y\times B $, flat over $ B $, such that for every $ b\in B $, $ \shf G_b $ is stable, then the set $ \{b\in B\mid \shf G_b\textrm{ is $ \hat{\mu} $-stable} \} $ is open.)
\end{remark}

Now, let \[ \mathcal{M}_{\mathcal{Y}}(\shf V, \OO_Y(1), P_0,\shf F, \delta(\nu)) \] be the functor associating to any scheme $ S $ of finite type the set of isomorphism classes of flat families of $ \delta(\nu) $-stable $ \shf F $-framed sheaves with Hilbert polynomial $ P_0 $ on $ \mathcal Y $, parametrised by $ S $.

We then have
\begin{theorem}[{\cite[Theorem 4.15]{BruzzoSala}}]\label{thm:ModSpaceExists}
	
	The functor $ \mathcal{M}_{\mathcal{Y}}(\shf V_\mathcal{Y}, \OO_Y(1), P_0,\shf F, \delta(\nu)) $ is represented by a quasiprojective scheme ${\mathbf M}_{\mathcal{Y}}(\shf V_\mathcal{Y}, \OO_Y(1), P_0,\shf F, \delta(\nu))$.
\end{theorem}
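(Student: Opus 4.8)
The plan is to follow the classical GIT construction of moduli of sheaves, as in Huybrechts--Lehn, transported to the stacky setting through the generating sheaf $\shf V$ and the functor $\shf G \mapsto f_*(\shf G \otimes \shf V\dual)$ relating coherent sheaves on $\mathcal Y$ to coherent sheaves on the coarse moduli scheme $Y$. The whole point of the polarisation $(\OO_Y(1),\shf V)$ is that it lets us measure everything on $Y$, where classical tools apply, while the $\delta(\nu)$-dependence and the framing datum $\phi_{\shf G}$ are tracked separately.

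First I would establish \textbf{boundedness}: the family of $\delta(\nu)$-semistable $\shf F$-framed sheaves with fixed Hilbert polynomial $P_0$ is bounded. Since $P_{\shf V}(\shf G,\nu)$ is by definition the ordinary Hilbert polynomial of $f_*(\shf G\otimes\shf V\dual)$ on $Y$, one translates the inequalities (1)--(2) defining (semi)stability (and their $\hat\mu$-refinement in \cref{def:muHatStability}) into uniform slope bounds for subsheaves of $f_*(\shf G\otimes\shf V\dual)$, and then invokes a Le Potier--Simpson type estimate together with Grothendieck's boundedness on $Y$. Boundedness yields a single integer $m$ such that every semistable $\shf G$ is $m$-regular in the sense of \cref{def:CastelnuovoMumford}; hence $f_*(\shf G\otimes\shf V\dual)(m)$ is globally generated with vanishing higher cohomology, and $\dim H^0$ equals $P_0(m)$.

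Fixing $V=\C^{P_0(m)}$, I would next realise every such framed sheaf as a point of a parameter scheme built from the Quot scheme of quotients $\shf V\otimes f^*\OO_Y(-m)\otimes_\C V \twoheadrightarrow \shf G$ on $\mathcal Y$, together with the framing morphism $\phi_{\shf G}\colon\shf G\to\shf F$; the framing is encoded as a section of the appropriate relative Hom-sheaf over the Quot scheme, cut out by locally closed conditions. The group $\GL(V)$ acts on this parameter scheme with orbits exactly the isomorphism classes of framed sheaves, and I would linearise the action using a combination of $\OO_Y(1)$ and the coefficients of $\delta$, arranged so that the framing contributes its weight through the term $\varepsilon(\phi_{\shf G})\delta$.

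The crux --- and the step I expect to be the main obstacle --- is to match the resulting GIT (semi)stability with the sheaf-theoretic $\delta(\nu)$-(semi)stability. This requires a careful analysis of the Hilbert--Mumford criterion: one-parameter subgroups of $\GL(V)$ correspond to weighted filtrations of $\shf G$, and one must show that the numerical GIT inequality reproduces precisely the inequalities (1)--(2), with the $\varepsilon(\phi_{\shf G})\delta$ correction emerging from the framing weight, and likewise that the chain $\hat\mu\text{-stable}\Rightarrow\text{stable}\Rightarrow\text{semistable}\Rightarrow\hat\mu\text{-semistable}$ is respected. Once this equivalence is in place, the geometric GIT quotient of the stable locus of the parameter scheme produces the scheme $\mathbf M_{\mathcal Y}(\shf V_{\mathcal Y},\OO_Y(1),P_0,\shf F,\delta(\nu))$; it is quasiprojective because a GIT quotient of a quasiprojective scheme by a reductive group with respect to an ample linearisation is quasiprojective, and --- since the framing rigidifies the objects so that stable framed sheaves have no nontrivial automorphisms, and $\hat\mu$-stability is open by \cref{rem:mustableIsStable} --- the quotient is a geometric one that represents the functor, giving a fine moduli space.
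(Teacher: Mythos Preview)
The paper does not prove this statement: it is quoted verbatim as \cite[Theorem 4.15]{BruzzoSala} and used as a black box. So there is no ``paper's own proof'' to compare against; your task was only to supply the reference.

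That said, your sketch is an accurate outline of the argument Bruzzo and Sala actually give in \cite{BruzzoSala}: boundedness via the functor $F_{\shf V}$ pushing the problem down to the coarse moduli scheme, uniform regularity, a Quot-scheme parameter space carrying the framing as an extra datum, a $\GL(V)$-linearisation weighted by $\delta$, and the identification of GIT (semi)stability with the sheaf-theoretic $\delta$-(semi)stability. The only point worth flagging is that in Bruzzo--Sala the representability of the relevant Quot functor on a projective Deligne--Mumford stack is itself a nontrivial input (due to Olsson--Starr and Nironi), which your outline tacitly assumes; and the rigidification of automorphisms comes not from $\hat\mu$-stability per se but from the fact that a nonzero framing forces the automorphism group of a stable framed object to be trivial (rather than merely $\C^\times$). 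With those caveats, your proposal matches the cited proof.
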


		\section{Compactified Kleinian singularities and sets of framed sheaves}

Choose, once and for all, a finite group $\Gamma\subset \SL_2(\C)$. It acts tautologically on $\C^2\coloneqq \Spec \C[x,y]$. The scheme $\C^2/\Gamma\coloneqq \Spec \C[x,y]^\Gamma$ is a \emph{Kleinian singularity}. Let $p\in \C^2/\Gamma$ be the unique singular point, and denote its preimage in $\C^2$ by $o$. To make a compactification of $\C^2/\Gamma$, we embed it into a projective stack in the following way: Extend the $\Gamma$-action to $\P^2\coloneqq \Proj \C[x,y,z]$ by setting $g\cdot z = z$ for all $g\in \Gamma$. We consider $\C^2$ as a subscheme of $\P^2$ by the obvious embedding. We also write $L\coloneqq\P^2\setminus \C^2= (z=0)$ , the 'line at infinity'. Let $U_L\coloneqq \P^2\setminus \{o\}$.
\begin{definition}\label{def:StackX}
	 We set $\mathcal{X}$ to be the fibre product
\[\mathcal{X}\coloneqq \C^2/\Gamma \times_{(\C^2\setminus \{o\})/\Gamma}[U_L/\Gamma].\] 
\end{definition}
For more details on this construction, see \cite[Definition 3.1]{Paper1}. 
Intuituvely, we have taken the `scheme-theoretic' quotient around $o$, and the `stack-theoretic' quotient around $L$. 

There is an induced morphism $\pi\colon \P^2\to \mathcal X$, factoring through the quotient stack $[\P^2/\Gamma]$.

From now on, we focus on the stack $\mathcal X$.
\begin{proposition}[{\cite[Lemma 3.2, Lemma 3.4, Proposition 3.5]{Paper1}}]\label{prop:XisProjective}
    $\mathcal{X}$ is a projective Deligne-Mumford stack of finite type, containing the singular scheme $\C^2/\Gamma $ as an open substack. It has a unique singular point, namely $p$. Its coarse moduli scheme is $\P^2/\Gamma$. The sheaf $\OO_{\mathcal X}(D)$ is a generating sheaf for $\mathcal{X}$.
\end{proposition}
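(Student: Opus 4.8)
The plan is to exploit the explicit description of $\mathcal{X}$ from \cref{def:StackX} as a gluing and then verify each assertion on the two resulting charts. First I would unwind the construction: since $\Gamma\subset\SL_2(\C)$ acts freely on $\C^2\setminus\{o\}$, the quotient $(\C^2\setminus\{o\})/\Gamma$ is a smooth scheme and coincides with the quotient stack $[(\C^2\setminus\{o\})/\Gamma]$. The construction then presents $\mathcal{X}$ as the gluing of the affine scheme $\C^2/\Gamma$ and the quotient stack $[U_L/\Gamma]$ along this common open substack. This immediately yields an étale atlas: the open immersion $\C^2/\Gamma\injto\mathcal{X}$ (a scheme chart) together with the canonical presentation $U_L\to[U_L/\Gamma]\injto\mathcal{X}$, which is étale since $\Gamma$ is a finite constant group, jointly cover $\mathcal{X}$. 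As both charts are of finite type and the diagonal is finite (hence unramified), $\mathcal{X}$ is a Deligne-Mumford stack of finite type, and $\C^2/\Gamma$ sits inside it as an open substack by construction, settling the first two assertions apart from projectivity.

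For the singular locus I would use that smoothness is checked on an atlas: $U_L=\P^2\setminus\{o\}$ is smooth, so $[U_L/\Gamma]$ is a smooth stack, and $(\C^2\setminus\{o\})/\Gamma$ is smooth. Hence the only place $\mathcal{X}$ can fail to be smooth is the point $p\in\C^2/\Gamma$, which for $\Gamma\neq 1$ is precisely the Kleinian singularity; this gives the unique singular point. For the coarse moduli scheme I would use that forming coarse spaces commutes with the open gluing above: the coarse space of $[U_L/\Gamma]$ is the geometric quotient $U_L/\Gamma$, that of $\C^2/\Gamma$ is itself, and gluing these along $(\C^2\setminus\{o\})/\Gamma$ reproduces $\P^2/\Gamma$ because $\P^2=\C^2\cup U_L$. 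Since $\P^2$ is projective and $\Gamma$ is finite, $\P^2/\Gamma$ is a projective scheme, so the coarse space is projective, as required.

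The substantive part, and the step I expect to be the main obstacle, is showing that the line bundle $\OO_{\mathcal{X}}(D)$ associated to the Cartier divisor $D$ is a generating sheaf, i.e. that the stabiliser of every geometric point acts faithfully on the fibre. The nontrivial stabilisers all lie along $D$: on the scheme chart $\C^2/\Gamma$ there are none, and on $[U_L/\Gamma]$ the stabiliser of $u\in U_L$ is $\Gamma_u=\{g\in\Gamma: g\cdot u = u\}$, which is trivial for $u\in\C^2\setminus\{o\}$ by the free action and possibly nontrivial only for $u\in L$. I would therefore fix $u\in L$ and compute the $\Gamma_u$-action on the one-dimensional fibre of $\OO_{\mathcal{X}}(D)$, a character $\chi_u\colon\Gamma_u\to\C^\times$. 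Pulling back along the atlas $U_L\to[U_L/\Gamma]$ identifies this with the $\Gamma_u$-action on the fibre at $u$ of $\OO_{\P^2}(L)=\OO_{\P^2}(1)$, equivalently on the normal line $N_{L/\P^2}|_u$. Choosing coordinates so that $u=[1:0:0]$ and diagonalising $g\in\Gamma_u$ as $\mathrm{diag}(\lambda_g,\lambda_g^{-1})$ on $\langle x,y\rangle$, with $g\cdot z=z$, a direct local computation gives $\chi_u(g)=\lambda_g^{\pm1}$, where $\lambda_g$ is the eigenvalue of $g$ on the eigenvector determining $u$.

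Faithfulness then reduces to the only genuinely group-theoretic input, which is also the reason the hypothesis $\Gamma\subset\SL_2(\C)$ is essential: if $\chi_u(g)=1$ then $\lambda_g=1$, but an element of $\SL_2(\C)$ with repeated eigenvalue $1$ is unipotent, and a unipotent element of finite order in characteristic zero is the identity. Hence $\chi_u$ is injective, the $\Gamma_u$-action on the fibre is faithful, and $\OO_{\mathcal{X}}(D)$ is a generating sheaf. Combined with projectivity of the coarse space from the previous paragraph, this shows $\mathcal{X}$ is a projective Deligne-Mumford stack in the sense of \cite{Kresch}, completing the proof. The only care required throughout is the bookkeeping of equivariant structures and of the sign in $\chi_u$; the conceptual content is entirely in the $\SL_2$-faithfulness.
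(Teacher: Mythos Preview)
The paper does not give a proof of this proposition; it simply cites \cite[Lemma 3.2, Lemma 3.4, Proposition 3.5]{Paper1}. So there is nothing in the present paper to compare your argument against. That said, your self-contained proof is correct and is essentially what such a proof must look like: the Deligne--Mumford, finite-type, open-substack, singular-locus, and coarse-moduli assertions all follow chartwise from the two-piece gluing, and for the generating-sheaf claim you correctly reduce to showing that for $u\in L$ the stabiliser $\Gamma_u$ acts faithfully on the fibre $N_{L/\P^2}|_u\cong\OO_{\P^2}(L)|_u$, which you then settle using the $\SL_2$ hypothesis. One cosmetic simplification: there is no need to diagonalise $g\in\Gamma_u$ first. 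Since $\Gamma_u$ preserves the line in $\C^2$ corresponding to $u$, every $g\in\Gamma_u$ is already upper triangular in the adapted basis, the character $\chi_u$ is its $(1,1)$-entry, and your ``unipotent of finite order in characteristic zero is trivial'' step applies directly.
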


Set $D\coloneqq \mathcal{X}\setminus \C^2/\Gamma$.
Various morphisms are induced by the definition of $\mathcal{X}$, which we notate in this way:
\begin{equation}\label{diag:bigDiagram}
	\begin{tikzcd}
		L \arrow[dd, two heads, "\pi|_{L}"'] \arrow[r, hook, "i"]                 & \P^2 \arrow[d, two heads, "k"] \arrow[dd, two heads, bend left=49, "\pi"] \\
		& {[\P^2/\Gamma]} \arrow[d, two heads, "q"]               \\
		D\arrow[r, hook, "j"] \arrow[ru, hook] & \mathcal X \arrow[d, two heads, "c"]                             \\
    & \P^2/\Gamma                                       
	\end{tikzcd}.
\end{equation}

Let $Q_0\coloneqq \{\rho_0,\dots, \rho_k\}$ be the irreducible representations of $\Gamma$, with $\rho_0$ as the trivial representation. Given any $\rho_m$, the $\Gamma$-equivariant sheaf $\OO_L\otimes \rho_m$ descends along $\pi|_L$ to a sheaf on $D$, we denote this sheaf by $\shf R_m$.
\begin{notation}
\emph{For the rest of the paper}, we fix a vector $\mathbf r = (r_0,\dots,r_k)\in \Z_{\ge 0}^{Q_0}$, and a locally free sheaf $\shf R\coloneqq \bigoplus_{i=0}^k \shf R_i^{\oplus r_i}$ of $\OO_D$-modules, and we set $r=\sum r_i$.
\end{notation}

In \cite{Paper1}, we introduced sets $X_{r,\mathbf v}$ and $Y_{r,n}$ of isomorphism classes of framed sheaves on $\P^2$ and $\mathcal X$, respectively. We now modify those:

\begin{definition}\label{def:Xrv}\label{def:Yrn}
	Choose a vector $\mathbf v= (v_0,\dots, v_k)\in \Z_{\ge 0}^{Q_0}$, and set $n = v_0$.
	
	Let $ X_{\mathbf r,\mathbf{v}} $ be the set of isomorphism classes of $ (L,\bigoplus_{i=1}^k \shf (\OO_L\otimes \rho_i)^{\mathbf r_i}) $-framed equivariant sheaves of $ \OO_{\P^2} $-modules $ (\shf F, \phi_{\shf F}) $, where $ \shf F $ 
	\begin{itemize}
		\item is coherent,
		\item is torsion-free of rank $ r\coloneqq \sum r_i $,
		\item satisfies $H^1(\P^2, \shf F\otimes \shf I_{L})\isoto \sum \rho_i^{v_i} $, where $ \shf I_{L} $ is the ideal sheaf of $ L\subset \P^2 $.
	\end{itemize} 

	Let $ Y_{\mathbf r,n} $ be the set of isomorphism classes of $ (D, \bigoplus_{i=1}^k \shf R_i^{r_i}) $-framed $ \OO_{\mathcal X} $-modules $ (\shf F, \phi_{\shf F}) $, where $ \shf F $ 
	\begin{itemize}
		\item is coherent,
		\item is torsion-free of rank $ r\coloneqq \sum r_i $,
		\item satisfies $ \dim H^1(\mathcal X, \shf F\otimes \shf I_{D})= n $, where $ \shf I_{D} $ is the ideal sheaf of $ D\subset \mathcal X $.
	\end{itemize}  
\end{definition}

\begin{remarks}\label{rmk:SchStructX}
	The set $ X_{\mathbf r,\mathbf{v}}$ is the set of $\C$-valued points of a natural moduli space $\mathbf{X}_{\mathbf{r,v}}$ which is a quasiprojective scheme. To see this, by \eg \cite[Theorem 0.1]{HuybrechtsLehnModuli} the moduli space of $(L,\OO_L^r)$-framed sheaves $\shf G$ of $\OO_{\P^2}$-modules satisfying $ \dim H^1(\P^2, \shf F\otimes \shf I_{L})= \sum {v_i} $ is a quasiprojective scheme, let us denote it $X_{r, v}$. 
	Then $ \mathbf X_{\mathbf r,\mathbf{v}}$ is a closed subscheme of $X_{r, v}$. 
	
	The set $X_{\mathbf r,\mathbf{v}}$ was introduced in \cite{VaVa}, but it was not there given an intrinsic scheme structure.
	\end{remarks}
	
\begin{remark}\label{rmk:locFreeNbhd}
	A framed sheaf lying in either $X_{\mathbf r,\mathbf{v}} $ or $ Y_{\mathbf r,n}$ is locally free in some neighbourhood of respectively $L$ or $D$, as we showed in \cite[Lemmata 4.4, 4.11]{Paper1}.
\end{remark}

We will often leave the framing isomorphisms $ \phi_{\shf E}, \phi_{\shf F} $ implicit, and write ${\shf E}\in X_{\mathbf r, \mathbf v},\ \shf F \in Y_{\mathbf r,n} $, simply calling each a `framed sheaf'. 

There are two important functors we will be using:
\begin{definition}\label{def:UpDownFunctors}
	\item Let $\pi^T\colon \Coh{\mathcal{X}}\to \ECoh{\Gamma}{\P^2}$ be the functor taking a sheaf $\shf F$ on $\mathcal{X}$ to the quotient of $\pi^*\shf F$ by its torsion subsheaf.
	
	\item Let $D\colon \ECoh{\Gamma}{\P^2}\to \Coh{\mathcal{X}}$ be the functor taking a $\Gamma$-equivariant sheaf $\shf E$ on $\P^2$ to the sheaf on $\mathcal X$ obtained by first descending along $k$, then taking the direct image along $q$.
\end{definition}

The two functors just introduced have nice properties when extended to framed sheaves:
\begin{lemma}
Let $\shf F\in Y_{\mathbf r, n}$, and let $\shf E\in X_{\mathbf{r}, \mathbf{v}}$. Then the framing morphism of $\shf F$ induces a framing morphism of $\pi^T \shf F$, and the framing morphism of $\shf E$ induces a framing morphism of $D(\shf E)$. 
We can thus extend the two functors to functors between categories of framed sheaves, keeping the same notation. Indeed, we have $D(\shf E)\in Y_{\mathbf r, n} $ for $n ={v}_0$, and there is a vector $\mathbf u\in \Z_{\ge 0}^{Q_0}$ such that $u_0 = n$, and $\pi^T(\shf F)\in X_{\mathbf r,\mathbf u}$. Furthermore, $D$ is exact.

\end{lemma}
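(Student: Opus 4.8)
\subsection*{Proof proposal}

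The plan is to base everything on two structural features of the maps in \eqref{diag:bigDiagram}. First, $q\colon[\P^2/\Gamma]\to\mathcal X$ is an isomorphism over a neighbourhood of $D$ (both restrict to $[U_L/\Gamma]$ there) and collapses only the stacky point $B\Gamma$ lying over $p$. Second, $\pi|_L\colon L\to D$ is exactly the quotient presentation $L\to[L/\Gamma]\cong D$, so that each $\shf R_i$ is by definition the descent of $\OO_L\otimes\rho_i$ along $\pi|_L$, whence $(\pi|_L)^*\shf R_i=\OO_L\otimes\rho_i$. I would first record that, because $|\Gamma|$ is invertible in $\C$, finite group cohomology vanishes in positive degree, giving $R^{>0}q_*=0$ and $q_*\OO_{[\P^2/\Gamma]}=\OO_{\mathcal X}$. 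Combined with $H^i([\P^2/\Gamma],\mathcal G)=H^i(\P^2,k^*\mathcal G)^\Gamma$, this yields, for any $\shf G$ on $\mathcal X$ and any equivariant $\mathcal H$ on $\P^2$,
\[ H^i(\mathcal X,\shf G)=H^i(\P^2,\pi^*\shf G)^\Gamma,\qquad H^i\bigl(\mathcal X, D(\mathcal H)\bigr)=H^i(\P^2,\mathcal H)^\Gamma. \]
Since descent along $k$ is an equivalence (hence exact) and $R^{>0}q_*=0$ makes $q_*$ exact on $\Coh{[\P^2/\Gamma]}$, the functor $D=q_*\circ(\text{descent})$ is exact, settling the final assertion.

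For the framings I would restrict along $j$ and $\pi|_L$. As members of $Y_{\mathbf r,n}$ are locally free near $D$ (\cref{rmk:locFreeNbhd}), $\pi^*\shf F$ has no torsion near $L=\pi^{-1}(D)$, so $(\pi^T\shf F)|_L=(\pi^*\shf F)|_L=(\pi|_L)^*(\shf F|_D)$ by $\pi\circ i=j\circ\pi|_L$; applying $(\pi|_L)^*$ to $\phi_{\shf F}$ and using $(\pi|_L)^*\shf R_i=\OO_L\otimes\rho_i$ produces the equivariant framing of $\pi^T\shf F$. Dually, as $q$ is an isomorphism near $D$, $D(\shf E)|_D$ is the descent of $\shf E|_L\isoto\bigoplus(\OO_L\otimes\rho_i)^{r_i}$ to $[L/\Gamma]=D$, which is exactly $\shf R$, so $\phi_{\shf E}$ descends to a framing of $D(\shf E)$. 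Functoriality and compatibility with the scalar $c\in\C^\times$ of \cref{def:FramedSheaf} is then formal, extending both functors to the framed categories.

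Next I would verify the membership conditions. Torsion-freeness of $\pi^T\shf F$ holds by construction, while that of $D(\shf E)$ follows since descent preserves it and $q$ is quasi-finite (a torsion subsheaf of $q_*\mathcal H$ would pull back to one of $\mathcal H$). For ranks I would apply \cref{lem:rankOnStackLocFree} with the line bundle $\shf V=\OO_{\mathcal X}(D)$: over the free locus $(\C^2\setminus o)/\Gamma$ the quotient is étale, and both descent and pullback preserve the generic rank $r$, giving $\rk D(\shf E)=\rk\pi^T\shf F=r$. Since $\OO_{\mathcal X}(D)$ is a line bundle and $z$ is $\Gamma$-invariant, $\pi^*\OO_{\mathcal X}(D)=\OO_{\P^2}(L)$, so $\shf I_D=\OO_{\mathcal X}(-D)$ is invertible with $\pi^*\shf I_D=\shf I_L$. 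Then the projection formula and the displayed comparison with $\mathcal H$ the descent of $\shf E\otimes\shf I_L$ give $H^1(\mathcal X,D(\shf E)\otimes\shf I_D)=H^1(\P^2,\shf E\otimes\shf I_L)^\Gamma=(\bigoplus\rho_i^{v_i})^\Gamma$, of dimension $v_0=n$; hence $D(\shf E)\in Y_{\mathbf r,n}$.

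It remains to treat $\pi^T\shf F$, where I would define $\mathbf u$ through the $\Gamma$-decomposition of $H^1(\P^2,\pi^T\shf F\otimes\shf I_L)$ and read off $u_0$ as the multiplicity of $\rho_0$, i.e. the dimension of the invariants. The main obstacle is matching this with $H^1(\mathcal X,\shf F\otimes\shf I_D)=H^1(\P^2,\pi^*\shf F\otimes\shf I_L)^\Gamma$: passing from $\pi^*\shf F$ to $\pi^T\shf F$ discards a torsion subsheaf $T$ supported at the single point $o$. I would argue this $0$-dimensional discrepancy contributes nothing to $H^1$: tensoring the sequence $0\to T\to\pi^*\shf F\to\pi^T\shf F\to 0$ with the line bundle $\shf I_L$ keeps $T\otimes\shf I_L$ point-supported, so $H^{>0}(T\otimes\shf I_L)=0$ and the long exact sequence forces $H^1(\P^2,\pi^*\shf F\otimes\shf I_L)\cong H^1(\P^2,\pi^T\shf F\otimes\shf I_L)$. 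Taking $\Gamma$-invariants gives $u_0=\dim H^1(\mathcal X,\shf F\otimes\shf I_D)=n$. Throughout I expect to lean on the rank-$1$ analogues \cite[Lemmata 4.4, 4.11]{Paper1}, the only new content being the bookkeeping of the isotypic framing $\shf R=\bigoplus\shf R_i^{r_i}$ and the identification of $v_0$ with the dimension of the invariant part of $H^1$.
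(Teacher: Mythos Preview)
Your proposal is correct and supplies precisely the details the paper omits: the paper's own proof is a one-line citation to \cite[Proposition 4.7, Corollary 4.14, and the discussion following Remark 4.2]{Paper1}, and your argument---using that $q$ is an isomorphism near $D$, that $R^{>0}q_*=0$ in characteristic zero, that $\pi^*\shf I_D=\shf I_L$, and that the point-supported torsion kernel $T$ contributes nothing to $H^1$---is exactly the kind of verification those references carry out in the special case $\mathbf r=(r,0,\dots,0)$. There is no genuine methodological difference to report.
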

\begin{proof}
These statements are straightforward extensions of Proposition 4.7 and Corollary 4.14 from \cite{Paper1}, and the final statement is shown following Remark 4.2], \emph{ibid}.
\end{proof}

\begin{lemma}\label{lem:upanddowniso}
$D\circ \pi^T(\shf F) = D\circ \pi^*(\shf F)=\shf F$ for any $\shf F\in Y_{\mathbf{r}, n}$. For any $\shf E\in X_{\mathbf r, \mathbf v}$, there is a canonical injective morphism $\pi^T(D(\shf E))\injto \shf E$.
\end{lemma}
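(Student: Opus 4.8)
The plan is to analyze the two functors $\pi^T$ and $D$ through the explicit geometry of diagram~\eqref{diag:bigDiagram}, reducing everything to statements about $\Gamma$-equivariant sheaves on $\P^2$ and their descent along $\pi$. The key structural fact I would exploit is that $\pi$ factors as $q \circ k$, where $k\colon \P^2 \to [\P^2/\Gamma]$ is the quotient map to the quotient stack and $q\colon [\P^2/\Gamma]\to \mathcal X$ is an isomorphism away from the preimage of the singular point $p$ (indeed $\mathcal X$ is obtained by taking the scheme-theoretic quotient only near $o$, so $q$ merely coarsens the stacky structure over $o$). On $\P^2$, a $\Gamma$-equivariant sheaf descends along $k$ to $[\P^2/\Gamma]$ essentially tautologically via equivariant descent, and the content of $D$ is the subsequent pushforward $q_*$.

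For the first equality $D\circ \pi^T(\shf F) = D\circ \pi^*(\shf F) = \shf F$, I would argue as follows. First I would show $D\circ\pi^*(\shf F) = \shf F$: since $\pi^*\shf F$ carries its canonical $\Gamma$-equivariant structure, descending it along $k$ and pushing forward along $q$ should recover $\shf F$, because $D$ and $\pi^*$ are adjoint-like and $\pi$ is (generically) the quotient by $\Gamma$. Concretely, away from $p$ this is ordinary $\Gamma$-invariant pushforward of a pullback, which returns the original sheaf; near $p$ one uses that $\shf F$ is an honest $\OO_{\C^2/\Gamma}$-module there. Then to get $D\circ\pi^T(\shf F)=D\circ\pi^*(\shf F)$, I must show that $D$ kills the torsion that $\pi^T$ quotients out: the torsion subsheaf of $\pi^*\shf F$ is supported on the exceptional locus over $p$ (since $\shf F$ is torsion-free on $\mathcal X$ and $\pi$ is an isomorphism away from $o$), and applying $D$—which involves taking $\Gamma$-invariants/pushforward—annihilates this torsion. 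I expect this torsion-annihilation step, matched against the claim that $\shf F$ is an $\OO_{\mathcal X}$-module genuinely pulled back, to be the main obstacle, as it requires controlling the behaviour of $\pi^*$ at the singular point $p$ rather than just on the locally free locus near $D$ guaranteed by \cref{rmk:locFreeNbhd}.

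For the second statement, the canonical injection $\pi^T(D(\shf E))\injto \shf E$, I would construct the map from the unit of the (pullback, pushforward)-type adjunction between $\pi^*$ and $D$. Starting from a $\Gamma$-equivariant $\shf E$, the sheaf $D(\shf E)$ on $\mathcal X$ is (descent then $q_*$); pulling back along $\pi$ and killing torsion yields $\pi^T(D(\shf E))$, and there is a natural comparison morphism to $\shf E$ coming from $\pi^*\pi_*^\Gamma \to \mathrm{id}$ (the counit). The reason this is only an injection, rather than an isomorphism as in the previous case, is that $\shf E$ lives on $X_{\mathbf r,\mathbf v}$ and its higher cohomology/invariants over the exceptional fibre need not match; the torsion-free quotient $\pi^T$ lands inside $\shf E$ but may miss a subsheaf. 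To verify injectivity, I would note that both sides agree on the dense open locus where $\pi$ is an isomorphism (away from the fibre over $o$), so the kernel of the comparison map is supported on that fibre; but $\pi^T(D(\shf E))$ is by construction torsion-free (being the torsion-free quotient of a pullback), hence has no subsheaf supported there, forcing the kernel to vanish. The delicate point is ensuring the comparison morphism is genuinely defined globally and is compatible with the framing structures, which I would settle by appealing to the functoriality already established in the preceding lemma and the cited Proposition 4.7 / Corollary 4.14 of \cite{Paper1}.
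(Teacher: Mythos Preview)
The paper itself gives no argument here: its proof reads in full ``Verbatim the same as in \cite[Lemmata 4.12, 4.13]{Paper1}.'' So there is nothing to compare against directly, and your outline is in fact a plausible reconstruction of what those lemmata contain. Your strategy---check the identities away from the singular point using that $q$ is an isomorphism there, and handle the fibre over $p$ via the explicit description of $D$ as $\Gamma$-invariants---is the natural one, and your argument for the injection $\pi^T(D(\shf E))\hookrightarrow \shf E$ (counit of the adjunction, factor through the torsion-free quotient because $\shf E$ is torsion-free, then observe the kernel is torsion supported at $o$ inside a torsion-free sheaf) is correct as stated.

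The one place where your reasoning is slightly soft is the ``torsion-annihilation'' step you flag as the main obstacle. You suggest that $D$ itself kills the torsion of $\pi^*\shf F$, but $D$ has no reason to annihilate an arbitrary $\Gamma$-equivariant torsion sheaf supported at $o$: taking $\Gamma$-invariants of a nonzero finite-length module is generally nonzero. The clean way to close this is to first establish $D\circ\pi^*(\shf F)\cong\shf F$ (which, near $p$, is the elementary computation $(\C[x,y]\otimes_{\C[x,y]^\Gamma}M)^\Gamma\cong M$ using the Reynolds splitting $\C[x,y]=\C[x,y]^\Gamma\oplus(\text{non-trivial isotypic part})$), and then apply the exactness of $D$ (which the paper records just before the lemma) to the sequence $0\to T\to\pi^*\shf F\to\pi^T\shf F\to 0$. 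This embeds $D(T)$ into $\shf F$; since $D(T)$ is supported at $p$ and $\shf F$ is torsion-free by hypothesis, $D(T)=0$ follows. With that adjustment your proposal is complete.
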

\begin{proof}
    Verbatim the same as in \cite[Lemmata 4.12, 4.13]{Paper1}.
\end{proof}

\section{A scheme structure for $ Y_{\mathbf r,n} $}\label{sec:YrnIsModScheme}\label{chap:ConstructModuliSpace}

Our goal is to prove \cref{thm:mainintro}, which we restate here in more detail:
\begin{theorem}\label{thm:main}
	There is a quasiprojective scheme  $ \mathbf{Y}_{\mathbf r,n}, $ which is a fine moduli space for the functor given by assigning to a scheme $S$ of finite type the set
		\[ \mathbf{Y}_{\mathbf r,n}(S) =\left\{{\begin{split}
			&\textrm{isomorphism classes of flat families of framed sheaves $(\shf F, \phi_{\shf F})\in \mathbf{Y}_{\mathbf{r}, n}$}\\
			&\textrm{parametrised by $S$.} 
	\end{split}}\right\}.\] 
	\end{theorem}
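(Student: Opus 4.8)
The plan is to identify $Y_{\mathbf r, n}$ with a Bruzzo--Sala moduli space of $\delta$-stable framed sheaves on $\mathcal X$ and to invoke \cref{thm:ModSpaceExists}. First I would recast the $(D, \shf R)$-framing of \cref{def:FramedAlongDivisor} as an ordinary framing in the sense of \cref{def:FramedSheaf}: taking the framing sheaf to be $\shf F_0 \coloneqq j_*\shf R$, a pair $(\shf F, \phi_{\shf F}) \in Y_{\mathbf r, n}$ determines an $\shf F_0$-framed sheaf $(\shf F, \Phi)$, where $\Phi$ is the composite $\shf F \to j_* j^* \shf F \xrightarrow{j_*\phi_{\shf F}} j_*\shf R$ of the adjunction unit with $j_*\phi_{\shf F}$. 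By \cref{prop:XisProjective}, $\mathcal X$ is a projective Deligne--Mumford stack with coarse moduli scheme $\P^2/\Gamma$ and generating sheaf $\OO_{\mathcal X}(D)$; it is moreover normal and irreducible, being a compactification of the normal, irreducible $\C^2/\Gamma$. Fixing the polarisation $(\OO_{\P^2/\Gamma}(1), \OO_{\mathcal X}(D))$ therefore places us in the situation of \cref{thm:ModSpaceExists}, once a stability polynomial $\delta$ is chosen.

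Next I would determine the modified Hilbert polynomial. Since $\dim \mathcal X = 2$, the polynomial $P_{\shf V}(\shf F, \nu)$ has degree $2$. The rank condition $\rk \shf F = r$ fixes its leading coefficient; the framing isomorphism $\shf F|_D \isoto \shf R$ fixes the restriction $\shf F|_D$, and hence the subleading coefficient; and the constant term is controlled by $\chi(\mathcal X, \shf F \otimes \shf I_D)$. Using torsion-freeness (\cref{def:torfreestack}) together with the framing to establish $H^0(\mathcal X, \shf F \otimes \shf I_D) = H^2(\mathcal X, \shf F \otimes \shf I_D) = 0$ (the latter via Serre duality on $\mathcal X$), the condition $\dim H^1(\mathcal X, \shf F \otimes \shf I_D) = n$ then gives $\chi(\mathcal X, \shf F \otimes \shf I_D) = -n$, pinning down the constant term. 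This singles out a modified Hilbert polynomial $P_0$; if these vanishings cannot be secured in full generality, one instead obtains a finite list of candidate polynomials, and it is precisely here that the boundedness result \cref{lem:boundedHilbPolys} --- and through it the comparison with equivariant framed sheaves on $\P^2$ --- enters to keep this list finite.

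The crux is the stability step: I would choose $\delta$ so that $\delta$-stability of $(\shf F, \Phi)$ is \emph{equivalent} to the good-framing condition, namely that $\Phi$ restricts to an isomorphism $\shf F|_D \isoto \shf R$ and $\shf F$ is torsion-free. In one direction, a subsheaf $\shf G' \subset \ker \Phi$ destabilising $(\shf F, \Phi)$ would have to pay the framing penalty $\delta$; following the Bruzzo--Sala analysis of $(\mathcal D, \shf R)$-framings, a suitable choice of the leading coefficient $\delta_1$ rules this out, yielding $\hat\mu$-stability and hence, by the implication chain after \cref{def:muHatStability}, stability. The delicate point is that we have dropped several of Bruzzo--Sala's hypotheses: $\mathcal X$ is \emph{singular} (at the point $p$, which however lies away from $D$), and we do not assume local freeness near $D$ a priori. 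Here I would use \cref{rmk:locFreeNbhd}, which guarantees that every framed sheaf in $Y_{\mathbf r, n}$ is locally free in a neighbourhood of $D$, so that $\shf F|_D$ is locally free of rank $r$ and the framing morphism is well-behaved, while torsion-freeness controls those destabilising subsheaves supported near the singular point $p$.

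Finally, for the polynomial $P_0$ (respectively each of the finitely many candidates) \cref{thm:ModSpaceExists} produces a quasiprojective fine moduli scheme $\mathbf M_{\mathcal X}(\OO_{\mathcal X}(D), \OO_{\P^2/\Gamma}(1), P_0, j_*\shf R, \delta)$ representing flat families of $\delta$-stable $\shf F_0$-framed sheaves. By the equivalence established in the stability step, its $\C$-points are exactly $Y_{\mathbf r, n}$ and its universal family is a family of framed sheaves in the sense of \cref{thm:main}; setting $\mathbf Y_{\mathbf r, n}$ to be this scheme (a finite disjoint union over the candidate $P_0$) gives the desired quasiprojective fine moduli space. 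I expect the main obstacle to be the stability step together with the finiteness of the set of Hilbert polynomials: reconciling the single numerical invariant $n = \dim H^1$ and the divisorial framing with $\delta$-stability on the \emph{singular} stack $\mathcal X$, and establishing boundedness, is exactly where \cref{lem:boundedHilbPolys} and the $\P^2$-comparison underlying \cref{lem:upanddowniso} are indispensable.
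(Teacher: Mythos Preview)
Your overall strategy---embed $Y_{\mathbf r,n}$ into a Bruzzo--Sala moduli space and invoke \cref{thm:ModSpaceExists}---matches the paper, but there is a genuine gap at the stability step and a missing ingredient before it.

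The gap is your claimed \emph{equivalence} between $\delta$-stability and the good-framing condition. On a smooth two-dimensional stack this is indeed what Bruzzo--Sala prove for $(\mathcal D,\shf R)$-framings, but here $\mathcal X$ is singular, and you give no argument for the reverse implication (``$\delta$-stable $\Rightarrow$ torsion-free with $\phi_{\shf F}$ an isomorphism''). The paper does \emph{not} prove this direction. It only shows that every $(\shf F,\phi_{\shf F})\in Y_{\mathbf r,n}$ is $\hat\mu$-stable (\cref{lem:sheavesInYrnAreStable}), so that $Y_{\mathbf r,n}$ maps into ${\mathbf M}_{\mathcal X}(\OO_{\mathcal X}(D),\OO_{\P^2/\Gamma}(1),P,j_*\shf R,\delta)$; it then shows that the locus $\mathbf Y_{\mathbf r,n}$ is \emph{open} in this moduli scheme, because torsion-freeness is open in flat families (\cite[Proposition 2.31]{BruzzoSala}) and, by semicontinuity on $D$, so is the condition that the adjoint $\shf F|_D\to\shf R$ be an isomorphism. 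That openness argument is what replaces your missing reverse implication, and it is essential.

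There is also a missing step in the forward direction. To choose a single $\delta_1$ working for \emph{all} $\shf F\in Y_{\mathbf r,n}$ you need a uniform upper bound on the hat-slopes of subsheaves $\shf F'\subset\shf F$. The paper obtains this from the stacky Grothendieck lemma, which in turn requires a uniform bound on the modified Castelnuovo--Mumford regularity of the sheaves in $Y_{\mathbf r,n}$ (\cref{lem:boundedregularity}); you never mention regularity. Relatedly, your direct determination of the Hilbert polynomial via ``Serre duality on $\mathcal X$'' is unsafe on a singular stack; the paper instead transports the computation to $\P^2$ via $\pi^T$ and \cref{lem:computingEulerChars}, proving in \cref{cor:sameHilbPoly} that there is a \emph{single} modified Hilbert polynomial (not merely finitely many), so no disjoint union is needed.
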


Recall that the vector $\mathbf r$ uniquely determines $\shf R$. Any finite-dimensional $\Gamma$-representation $V$ decomposes into a direct sum of irreducible representations, and is thus also determined by $\mathbf{r}$ -- so the statement of \cref{thm:main} is indeed a version of \cref{thm:mainintro}.
The $\C$-valued points of $\mathbf{Y}_{\mathbf r,n}$ can be identified with $Y_{\mathbf r, n}$.

The proof of \cref{thm:main} will take up most of the remaining paper.
Our strategy is to use \cref{thm:ModSpaceExists}, and show that  $\mathbf{Y}_{\mathbf r,n}$ is an open subscheme of a moduli space of the type ${\mathbf M}_{\mathcal{Y}}(\shf V_\mathcal{Y}, \OO_Y(1), P_0,\OO_{D}^r , \delta(\nu))$. Thus we must show that:
\begin{itemize}
	\item there is a $ \delta_1\in \Q $ such that any $ \shf F\in Y_{\mathbf r,n} $ is $ \hat{\mu}$-stable with respect to $ \delta_1 $.
	\item the modified Hilbert polynomial $ P_{\OO_{X}(D)}(\shf F,n) $ is independent of the choice of  $ \shf F\in Y_{\mathbf r,n} $, and
	\item there is a bound for the modified Castelnuovo-Mumford regularities of the framed sheaves in $ Y_{\mathbf r,n} $.
	
\end{itemize} 

We start by determining the Hilbert polynomials.
\subsection{Hilbert polynomials of framed sheaves on $ \mathcal X $}\label{sec:HilbPolys}\label{subsec:EqSheavesOnP2}
The isomorphism classes of $ \Gamma $-equivariant line bundles on $ \P^2 $ form a group under $ \otimes $, which we denote $ \Pic_\Gamma(\P^2) $. This group is usually called the \emph{equivariant Picard group}, see e.g. \cite{Sadhu} or \cite{MFK}.

Forgetting the equivariant structure gives a homomorphism $ t\colon\Pic_\Gamma(\P^2)\to \Pic(\P^2) $.

For instance, the line bundle $ \OO_{\P^2}(L) $ has a $ \Gamma $-equivariant structure induced by the action of $ \Gamma $ on $ \P^2 $, and it maps to $ \OO_{\P^2}(1) $ under $ t $. Especially, the ideal sheaf sequence \[0\to \OO_{\P^2}(-L)\to \OO_{\P^2}\to \OO_{L}\to 0 \] is equivariant.
By \cite[Proposition 4.2]{KnopKraftVust}, the set of elements of $ \Pic_\Gamma(\P^2)$ that are inverse images under $\pi\circ c$ of line bundles on $\P^2/\Gamma$ consists exactly of those line bundles $ \shf L $ such that, for each closed point $x\in \P^2 $, the action of the stabiliser group $ \Gamma_x $ on fibres of $ \shf L $ is trivial.

For any point $ x\in\P^2 $, the action of the stabiliser group (for the action of $\Gamma$ on $\P^2$) on the fibre of $ \OO_{\P^2}(L) $ at $x$ factors through the action of some cyclic group, because the fibres are one-dimensional. Therefore there must be some minimal $ m $ such that the tensor power $\OO_{\P^2}(mL) =\OO_{\P^2}(L)^{\otimes m}$ lies in the inverse image of $ \Pic(\P^2/\Gamma) $ under $\pi\circ c$. This tensor power must furthermore be (very) ample. 

From now on, fix $\OO_{\P^2/\Gamma}(1)$ to be a line bundle on $\P^2/\Gamma$ such that \[ \pi^*c^*\OO_{\P^2/\Gamma}(1) = \OO_{\P^2}(L)^{\otimes m} .\] Because $ \P^2/\Gamma $ is projective and $ c\circ\pi\colon \P^2\to \P^2/\Gamma $ is finite, by \cite[Exercise III.5.7.d]{Har77}, $ \OO_{\P^2/\Gamma}(1)$ is ample.

\subsubsection*{Equivariant Euler characteristics}\label{sec:equivEuler}
Following Ellingsrud and Lønsted~\cite{EllLon}, we introduce an `equivariant Euler characteristic'.\todo{rewrite with Lefschetz Trace}

Recall that the standard Euler characteristic $ \chi $ for coherent sheaves on a scheme $ X $ can be interpreted as a homomorphism $ \chi\colon K(X)^f\to \Z $, where $ K(X)^f $ is the subgroup of the Grothendieck group $ K(X) $ given by the equivalence classes of sheaves $ \shf F $ with finite-dimensional cohomology groups $ H^i(X, \shf F) $.
Now, let $ G $ be a finite group acting on $ X $, let $ \operatorname{Irr}_G $ be its set of irreducible representations, and let $ R G $ be the representation ring of $ G $ over $ \C $. If $ \shf F $ is a $ G $-equivariant sheaf on $ X $, each cohomology group $ H^i(X, \shf F) $ carries an induced $ G $-representation structure, and we denote its image in $ R G $ by $ [H^i(X, \shf F)] $. 

Let $ K_G(X)^f $ be the finite-dimensional equivariant $ K $-theory of $ X $: Take the free abelian group $F$ on the isomorphism classes of $ G $-equivariant coherent $ \OO_{\mathcal{X}} $-modules with finite-dimensional cohomology groups.  For every short exact sequence of $G$-equivariant homomorphisms \[ 0\to \shf F\to \shf E\to \shf G\to 0\] of such equivariant sheaves, consider the expression $ [\shf F]+[\shf G]-[\shf E] $ in $F$. Then $ K_G(X)^f $ is the quotient of $F$ by the subgroup generated by the set of all such isomorphism classes.
There is then a map $ \chi_G\colon K_G(X)\to RG $, defined by 
\[ \shf F\mapsto \sum_{i\ge 0} (-1)^i [H^i(X, \shf F)].\] (This is sometimes called a \emph{Lefschetz trace}, see e.g. \cite[Definition 1.1]{EllLon}.)

It is then clear that the ordinary Euler characteristic $ \chi $ is the composition of $ \chi_G $ with the homomorphism $ R G\to \Z $ defined by $ \sum_{\rho_j\in \operatorname{Irr}_G} n_j[\rho_j]\mapsto \sum_{\rho_j\in \operatorname{Irr}_G}n_j\dim \rho_j $ -- here we have used Maschke's theorem to say that any $ G $-representation splits into a direct sum of irreducible representations.
Furthermore, there are `isotypic projection' group homomorphisms $ p_j\colon RG\to \Z $, given by $ p_j\colon \sum_{\rho_j\in \operatorname{Irr}_G} n_j[\rho_j]\mapsto n_j$.

We now set $ G=\Gamma, X = \P^2 $, and write $ \chi_0 \coloneqq p_{0}\circ\chi_\Gamma$.
Let's use this to compute Euler characteristics for sheaves on our stack $ \mathcal{X} $.
\begin{lemma}\label{lem:computingEulerChars}
	Let $ \shf E\in \Coh{\mathcal{X}} $. Let $ \shf F $ be any $ \Gamma $-equivariant coherent sheaf on $ \P^2 $ such that $ D(\shf F) =\shf E$. Then
	\[ \chi(\mathcal{X}, \shf E) = \chi_0(\P^2, \shf F) .\]
\end{lemma}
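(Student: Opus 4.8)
The plan is to factor the functor $D$ through the intermediate quotient stack $[\P^2/\Gamma]$ of the diagram \eqref{diag:bigDiagram}, splitting the problem into a ``descent'' step along $k$ and a ``pushforward'' step along $q$. Write $\tilde{\shf F}$ for the coherent sheaf on $[\P^2/\Gamma]$ corresponding to the $\Gamma$-equivariant sheaf $\shf F$ under the standard equivalence $\ECoh{\Gamma}{\P^2}\simeq \Coh{[\P^2/\Gamma]}$; this is precisely what ``descending along $k$'' produces, and by the definition of $D$ we have $\shf E = D(\shf F) = q_*\tilde{\shf F}$. The strategy is then to establish the two equalities
\[ \chi(\mathcal X, q_*\tilde{\shf F}) = \chi([\P^2/\Gamma], \tilde{\shf F}) = \chi_0(\P^2, \shf F), \]
whence the lemma follows by concatenation. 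This factorisation will also transparently explain why the value does not depend on the choice of lift $\shf F$ of $\shf E$.

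For the right-hand equality I would invoke the standard fact that cohomology of a coherent sheaf on the quotient of a scheme by a finite group in characteristic $0$ is computed by the $\Gamma$-invariants of the equivariant cohomology downstairs: since $k\colon\P^2\to[\P^2/\Gamma]$ is a $\Gamma$-torsor (finite étale of degree $|\Gamma|$) and taking $\Gamma$-invariants is exact over $\C$ by the Reynolds operator, one obtains $H^i([\P^2/\Gamma],\tilde{\shf F}) = H^i(\P^2,\shf F)^\Gamma$ for all $i$. Taking the alternating sum of dimensions and recalling that $\dim(-)^\Gamma = p_0([-])$ on $\Gamma$-representations then gives $\chi([\P^2/\Gamma],\tilde{\shf F}) = \sum_i (-1)^i p_0([H^i(\P^2,\shf F)]) = p_0(\chi_\Gamma(\P^2,\shf F)) = \chi_0(\P^2,\shf F)$.

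For the left-hand equality the key point, and the main obstacle, is to show that $q_*$ is \emph{exact} on coherent sheaves, i.e.\ that $R^j q_*\tilde{\shf F}=0$ for all $j>0$; granted this, the Leray spectral sequence for $q$ collapses and yields $H^i(\mathcal X, q_*\tilde{\shf F})\cong H^i([\P^2/\Gamma],\tilde{\shf F})$, and in particular equality of Euler characteristics. To verify the vanishing I would work locally on $\mathcal X$ using its description as a fibre product (\cref{def:StackX}): over the open substack $[U_L/\Gamma]\subset\mathcal X$ the morphism $q$ is an isomorphism, so nothing needs checking, while over a neighbourhood of the unique singular point $p$ (\cref{prop:XisProjective}) the morphism $q$ restricts to the coarse-moduli map $[\C^2/\Gamma]\to\C^2/\Gamma$, which on modules sends an equivariant module $M$ to $M^\Gamma$; exactness of $\Gamma$-invariants in characteristic $0$ again forces the higher direct images to vanish.

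I expect the delicate part to be exactly this local analysis of $q$ near $p$, together with checking that the two local descriptions agree over the overlap $(\C^2\setminus\{o\})/\Gamma$, where $\Gamma$ acts freely; once $Rq_*=q_*$ is secured, the remainder is formal manipulation of the Leray spectral sequence and of the exact invariants functor. The identification of stacky cohomology with $\Gamma$-invariants and the translation $\dim(-)^\Gamma = p_0$ are standard and should require no further work.
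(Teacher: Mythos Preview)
Your proposal is correct and follows essentially the same route as the paper. Both arguments factor through $[\P^2/\Gamma]$ and use $H^i([\P^2/\Gamma],\tilde{\shf F})=H^i(\P^2,\shf F)^\Gamma$ for the second equality; for the first equality the paper simply invokes that $\mathcal X$ and $[\P^2/\Gamma]$ share $\P^2/\Gamma$ as coarse moduli space (hence $c_*q_*\tilde{\shf F}=c'_*\tilde{\shf F}$ and cohomology is preserved by the coarse-moduli pushforward), whereas you unpack the same content by checking directly that $Rq_*=q_*$ via the local description of $q$ --- the underlying input, exactness of $\Gamma$-invariants in characteristic $0$, is identical.
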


\begin{proof}
	This follows from the fact that $ \mathcal{X} $ and $ [\P^2/\Gamma] $ share $ \P^2/\Gamma $ as their coarse moduli space, together with  the fact that $H^i([\P^2/\Gamma], \shf G) = H^i(\P^2, k^*\shf G)^\Gamma$ for any sheaf on $[\P^2/\Gamma]$ (see \eg \cite[Lemma 2.19]{Paper1}).
\end{proof}

\begin{proposition}\label{prop:sameEquivHilbFunct}
	Let $ \shf F\in Y_{\mathbf r,n} $.
	
	The function 
	\[ H_\Gamma(\shf F,\nu)\coloneqq \nu\mapsto \chi_0(\P^2, \pi^T\shf F(\nu L)) \] does not depend on the choice of $ \shf F $.
\end{proposition}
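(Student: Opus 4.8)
\subsection*{Proof plan}

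The plan is to pass to the equivariant sheaf $\shf E\coloneqq\pi^T\shf F$ on $\P^2$ and to exploit that $\chi_\Gamma$, and hence $\chi_0=p_0\circ\chi_\Gamma$, is additive on short exact sequences, i.e. factors through the finite-dimensional equivariant $K$-theory $K_\Gamma(\P^2)^f$. The unnamed lemma preceding \cref{lem:upanddowniso} gives $\shf E\in X_{\mathbf r,\mathbf u}$ for some $\mathbf u$ with $u_0=n$; in particular the framing furnishes a \emph{fixed} equivariant isomorphism $\shf E|_L\isoto\bigoplus_i(\OO_L\otimes\rho_i)^{r_i}$, and $H^1(\P^2,\shf E\otimes\shf I_L)\cong\bigoplus_i\rho_i^{u_i}$ with $u_0=n$ (cf.\ \cref{def:Xrv}). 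By definition $H_\Gamma(\shf F,\nu)=\chi_0(\P^2,\shf E(\nu L))$, so it suffices to show that this value depends only on the fixed data $\mathbf r$ and $n$. (Equivalently, using \cref{lem:computingEulerChars} together with the projection formula $D(\shf E(\nu L))=\shf F(\nu D)$, one could rewrite $H_\Gamma(\shf F,\nu)=\chi(\mathcal X,\shf F(\nu D))$ and argue on $\mathcal X$; I prefer to stay on the smooth $\P^2$ so that Serre duality is available in the base case below.)

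First I would set up a recursion in $\nu$. Tensoring the equivariant ideal sequence $0\to\OO_{\P^2}(-L)\to\OO_{\P^2}\to\OO_L\to 0$ with $\shf E(\nu L)$ — which stays exact since $\shf E$ is locally free near $L$ by \cref{rmk:locFreeNbhd} — yields the equivariant short exact sequence
\[ 0\to\shf E((\nu-1)L)\to\shf E(\nu L)\to\shf E(\nu L)|_L\to 0. \]
Applying the additive map $\chi_0$ gives $H_\Gamma(\shf F,\nu)-H_\Gamma(\shf F,\nu-1)=\chi_0(L,\shf E(\nu L)|_L)$. The right-hand side involves only the fixed equivariant sheaf $\shf E|_L\cong\bigoplus_i(\OO_L\otimes\rho_i)^{r_i}$ and the fixed line bundle $\OO_{\P^2}(\nu L)|_L$, hence is independent of $\shf F$. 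Therefore all finite differences of $\nu\mapsto H_\Gamma(\shf F,\nu)$ are independent of $\shf F$, and it remains only to pin down a single value.

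I would fix the base value at $\nu=-1$, namely
\[ H_\Gamma(\shf F,-1)=\chi_0(\P^2,\shf E\otimes\shf I_L)=\sum_i(-1)^i\,p_0\!\left([H^i(\P^2,\shf E\otimes\shf I_L)]\right). \]
Here $p_0([H^1(\P^2,\shf E\otimes\shf I_L)])=u_0=n$, so the claim reduces to the vanishing of the $i=0$ and $i=2$ terms. This is the main obstacle. I would invoke the standard vanishing $H^0(\P^2,\shf E\otimes\shf I_L)=H^2(\P^2,\shf E\otimes\shf I_L)=0$ for framed torsion-free sheaves, as in the constructions of \cite{Nakajimabook} and \cite{VaVa} and as recorded for the class $X_{\mathbf r,\mathbf v}$ in \cite{Paper1}. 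The $H^2$ vanishing is where the smoothness of $\P^2$ is convenient: by Serre duality it is dual to $\Hom(\shf E,\OO_{\P^2}(L-3))$, and the triviality of $\shf E|_L$ forces any such homomorphism to vanish on $L$, hence to factor through $\OO_{\P^2}(-k)$ for arbitrarily large $k$, so it must be zero. Granting these vanishings, $H_\Gamma(\shf F,-1)=-n$, and combined with the $\shf F$-independence of every finite difference this shows that $\nu\mapsto H_\Gamma(\shf F,\nu)$ is one and the same function for all $\shf F\in Y_{\mathbf r,n}$.
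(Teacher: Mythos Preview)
Your proposal is correct and follows essentially the same approach as the paper: both set up the recursion from the equivariant ideal sequence of $L$ (using local freeness of $\pi^T\shf F$ near $L$ to keep exactness), observe that the difference term depends only on the fixed framing data $\mathbf r$, and anchor the recursion at $\nu=-1$ via the vanishing of $H^0$ and $H^2$ of $\pi^T\shf F(-L)$, which the paper obtains by citing \cite[Lemma~2.4]{Nakajimabook} and which you sketch by Serre duality. Your base value $H_\Gamma(\shf F,-1)=-n$ is in fact the correct sign (the paper's ``$=n$'' is a harmless typo), but this does not affect the argument.
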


\begin{proof}
	
	The sheaf $ \pi^T\shf F $ lies in some $ X_{\mathbf r, \mathbf{v}} $ such that $ \mathbf v_0=n $\todo{this one might not survive arbitrary cornering} by \cite[Corollary 4.14]{Paper1}, and so
	 $ \dim(H^1(\P^2, \pi^T\shf F(-L)))^\Gamma=n $. 
	
	It follows from \cite[Lemma 2.4]{Nakajimabook} that 
	\[ H^i(\P^2, \pi^T\shf F(-p L))=0 \] for $ i=0,2; p=1,2 $. 	Especially, we find 
	\[ \chi_0(\pi^T\shf F(-L))=n.\]
	
	We can for every $ \nu\in \Z $ set up a short exact $ \Gamma $-equivariant sequence
	\[ 0\to \pi^T\shf F((\nu -1)L)\to \pi^T\shf F(\nu L)\to \pi^T\shf F|_{L}(\nu L)\to 0 ,\] which is exact because $ \pi^T\shf F $ by \cref{rmk:locFreeNbhd} is locally free in a neighbourhood of $ L $.
	Now $ \pi^T\shf F|_{L}= \bigoplus_{\rho_i\in Q_0} (\OO_L\otimes \rho_i)^{r_i}$,
	which shows 
	that 
	\[ \chi_0(\P^2, \pi^T\shf F(\nu L))  = \chi_0(\P^2, \pi^T\shf F((\nu-1) L))+  \chi_0\left(\bigoplus_{\rho_i\in Q_0} (\OO_L\otimes \rho_i)^{r_i}(\nu L))\right).\]
	Since $\mathbf r = (r_i)$ is fixed, this implies that for any $ \nu $, $ \chi_0(\P^2, \pi^T\shf F(\nu L)) $ can be iteratively computed from the function $\nu\mapsto \left(\bigoplus_{\rho_i\in Q_0} (\OO_L\otimes \rho_i)^{r_i}(\nu L))\right)$ and $ \chi_0(\P^2, \pi^T\shf F(- L))=n$, both of which are independent of the choice of $\shf F$. This concludes the proof.

\end{proof}
The function $ H_\Gamma(\shf F,\nu) $ is not in general a polynomial in $\nu$. A slight adaptation of the next proof shows that it is however a \emph{quasi-polynomial} \ie there is a set of numerical polynomials $ p_1,\dots, p_m \in \Q[\nu]$ such that $ H_\Gamma(\shf F,\nu) = p_i(\nu)$ for $ \nu \equiv i \operatorname{mod} m $. We will not make use of this.
\begin{corollary}\label{cor:sameHilbPoly}
	All framed sheaves $ \shf F\in Y_{\mathbf r,n} $ have the same modified Hilbert polynomial.
\end{corollary}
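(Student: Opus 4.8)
The plan is to express the modified Hilbert polynomial $P_{\OO_{\mathcal X}(D)}(\shf F,\nu)$ directly in terms of the function $H_\Gamma(\shf F,\nu)$ from \cref{prop:sameEquivHilbFunct}, which is already known to be independent of the choice of $\shf F$. Recall that by \cref{prop:XisProjective} the generating sheaf is $\shf V=\OO_{\mathcal X}(D)$, while the polarisation on the coarse space is $\OO_{\P^2/\Gamma}(1)$ with coarse moduli map $c$, so by definition
\[ P_{\OO_{\mathcal X}(D)}(\shf F,\nu)=\chi\bigl(\mathcal X,\ \shf F\otimes c^*\OO_{\P^2/\Gamma}(\nu)\otimes\OO_{\mathcal X}(-D)\bigr). \]

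First I would establish a projection-formula identity for the functor $D$: for any $\Gamma$-equivariant sheaf $\shf E$ on $\P^2$ and any line bundle $\shf L$ on $\mathcal X$, one has $D(\shf E)\otimes\shf L=D(\shf E\otimes\pi^*\shf L)$. This follows by unwinding the definition of $D$ as descent along $k$ followed by pushforward along $q$: if $\tilde{\shf E}$ is the descent of $\shf E$ to $[\P^2/\Gamma]$, then $\tilde{\shf E}\otimes q^*\shf L$ is the descent of $\shf E\otimes k^*q^*\shf L=\shf E\otimes\pi^*\shf L$, and the ordinary projection formula $q_*(\tilde{\shf E}\otimes q^*\shf L)=q_*\tilde{\shf E}\otimes\shf L$ against the line bundle $\shf L$ then gives the claim.

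Next, using $\shf F=D(\pi^T\shf F)$ from \cref{lem:upanddowniso}, I would apply this identity with $\shf E=\pi^T\shf F$ and $\shf L=c^*\OO_{\P^2/\Gamma}(\nu)\otimes\OO_{\mathcal X}(-D)$. Since $\pi^*c^*\OO_{\P^2/\Gamma}(1)=\OO_{\P^2}(mL)$ (the defining property fixed earlier) and $\pi^*\OO_{\mathcal X}(D)=\OO_{\P^2}(L)$, we obtain $\pi^*\shf L=\OO_{\P^2}((m\nu-1)L)$, whence
\[ \shf F\otimes c^*\OO_{\P^2/\Gamma}(\nu)\otimes\OO_{\mathcal X}(-D)=D\bigl(\pi^T\shf F((m\nu-1)L)\bigr). \]
Feeding this into \cref{lem:computingEulerChars} converts the Euler characteristic on $\mathcal X$ into an equivariant one on $\P^2$:
\[ P_{\OO_{\mathcal X}(D)}(\shf F,\nu)=\chi_0\bigl(\P^2,\ \pi^T\shf F((m\nu-1)L)\bigr)=H_\Gamma(\shf F,\,m\nu-1). \]
By \cref{prop:sameEquivHilbFunct} the right-hand side is independent of $\shf F\in Y_{\mathbf r,n}$ for every integer $\nu$; since $P_{\OO_{\mathcal X}(D)}(\shf F,-)$ is a genuine polynomial, two such polynomials taking the same values at all integers must coincide, which yields the corollary.

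The hard part will be the projection-formula identity for $D$: one must check carefully that tensoring by the pullback of a line bundle on $\mathcal X$ is compatible with both descent along $k$ and pushforward along $q$, and confirm the two pullback identities $\pi^*\OO_{\mathcal X}(D)=\OO_{\P^2}(L)$ and $\pi^*c^*\OO_{\P^2/\Gamma}(1)=\OO_{\P^2}(mL)$. Once these are in place, the remainder is purely a matter of substituting into \cref{lem:computingEulerChars} and \cref{prop:sameEquivHilbFunct}.
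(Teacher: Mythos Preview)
Your proposal is correct and follows essentially the same route as the paper: both arguments reduce $P_{\OO_{\mathcal X}(D)}(\shf F,\nu)$ to $H_\Gamma(\shf F,m\nu-1)$ via \cref{lem:computingEulerChars}, and then invoke \cref{prop:sameEquivHilbFunct}. The only difference is packaging: where you isolate a clean projection-formula identity $D(\shf E)\otimes\shf L\cong D(\shf E\otimes\pi^*\shf L)$ and combine it with $\shf F=D(\pi^T\shf F)$, the paper instead proves directly, by restricting to the two opens $U$ (around $p$) and $V$ (around $D$), that $D\bigl(\pi^T(\shf F\otimes\shf I_D\otimes c^*\OO_{\P^2/\Gamma}(\nu))\bigr)=\shf F\otimes\shf I_D\otimes c^*\OO_{\P^2/\Gamma}(\nu)$ and then computes $\pi^T$ of the tensor product; this is the same identity argued pointwise rather than functorially.
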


\begin{proof}
	Let $ \shf F\in Y_{\mathbf r,n} $.
	
	Since the dual of $ \OO({D}) $ is $ \shf I_{D} $, we have
	\[P_{\OO(D)}(\shf F, \nu)= \chi(\mathcal{X}, \shf F\otimes \shf I_{D}\otimes c^*\OO_{\P^2/\Gamma}(\nu)).\]
		To compute these Euler characteristics, we shall work on $ \P^2 $.
	First, we make the following \emph{Claim:}
	\begin{multline} D(\pi^*\big(\shf F\otimes \shf I_{D}\otimes c^*\OO_{\P^2/\Gamma}(\nu)\big))=D(\pi^T\big(\shf F\otimes \shf I_{D}\otimes c^*\OO_{\P^2/\Gamma}(\nu)\big))\\=\shf F\otimes \shf I_{D}\otimes c^*\OO_{\P^2/\Gamma}(\nu).\end{multline}
	To see this, it is enough to restrict to some open substack $ U \subset \mathcal{X} $ trivialising \newline $\shf I_{(D)}\otimes c^*\OO_{\P^2/\Gamma}(\nu) $, such that $ p\in U $. Here we have 
	\[ D\left(\pi^*(\shf F\otimes \shf I_{D}\otimes c^*\OO_{\P^2/\Gamma}(\nu))\right)|_U \isoto D(\pi^*(\shf F|_{U})) ,\]
	\[ D\left(\pi^T(\shf F\otimes \shf I_{D}\otimes c^*\OO_{\P^2/\Gamma}(\nu))\right)|_U \isoto D(\pi^T(\shf F|_{U})) , \] and the objects on the right-hand side of the two above isomorphisms both equal $ \shf F|_U $  by \cref{lem:upanddowniso}. Take another open $\Gamma$-invariant substack $V\subset \mathcal X\setminus \{p\}$ with $V\cup U = \mathcal X$ and $D\subset V$, then  $D\circ \pi^T|_V = D\circ\pi^*|_V = \id$. The transition maps on $U\cap V$ are unchanged by these last two equalities, and we have shown the Claim.
	
	By construction of $ \OO_{\P^2/\Gamma}(1) $, we have 
	\begin{multline} \pi^T\left(\shf F\otimes \shf I_{D}\otimes c^*\OO_{\P^2/\Gamma}(\nu)\right)=\pi^T\shf F\otimes \shf I_{L}\otimes \OO_{\P^2}(\nu m L) \\
		=\pi^T\shf F(\nu(m-1)L).\end{multline}
	Now we can apply \cref{lem:computingEulerChars}, which tells us that 
	\[\chi(\mathcal{X}, \shf F\otimes \shf I_{(D)}\otimes c^*\OO_{\P^2/\Gamma}(\nu))=\chi_0(\P^2,\pi^T\shf F(\nu(m-1)L)).
	\]
	So we find that $ P_{\OO(D)}(\shf F, \nu)$ is given by \[ \nu\mapsto H_\Gamma(\pi^T\shf F,\nu m-1),\] which by \cref{prop:sameEquivHilbFunct} is the same for all $ \shf F\in Y_{\mathbf r,n} $.
	
\end{proof}

\subsection{Boundedness and regularity}\label{sec:boundedYrn}
We will now show that the set $ \{\pi^T\shf F|\shf F\in Y_{\mathbf r,n}\} $ is \emph{bounded}\ie the set of Hilbert polynomials \[ \{P(\pi^T\shf F, \nu)\mid\shf F\in Y_{\mathbf r,n}\} \] is finite. 
For a coherent sheaf $ \shf G $ on $ \P^2 $, set $  P(\shf G, \nu)\coloneqq \chi(\P^2, \shf G\otimes \OO_{\P^2}(\nu)) $ to be the (ordinary) Hilbert polynomial.
\begin{remark}\label{rem:bounded}
	The more usual definition, in \cite[Definition 4.10]{Nironi} (for Deligne-Mumford stacks) originally for schemes in \cite[XIII-1.12]{SGA6}, would say: $ Y_{\mathbf r,n} $ is \emph{bounded} if there is a scheme $ T $ of finite type over $ \C $ and a coherent sheaf $ \shf G $ on $ \mathcal X\times T $ such that every element of $ Y_{\mathbf r,n} $ is contained in the fibres
	of $ \shf G $ over closed points of $ T $. The equivalence with our definition is proved in \cite[Theorem 4.12]{Nironi}, assuming that the set of sheaves $ Y_{\mathbf r,n} $ satisfies a cohomological condition.
\end{remark}

\begin{lemma}\label{lem:boundedHilbPolys}
	The set of Hilbert polynomials $\{P(\pi^T\shf F, \nu) \}$ for $ \shf F\in Y_{\mathbf r,n} $ is finite.
\end{lemma}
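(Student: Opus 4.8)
The plan is to reduce the statement to the finiteness of a single numerical invariant, and then to control that invariant using the quiver-variety description of $\Gamma$-equivariant framed sheaves on $\P^2$.

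First I would recall that the ordinary Hilbert polynomial of a torsion-free sheaf of rank $r$ on $\P^2$ is determined, via Riemann--Roch, by the triple $(\rk, c_1, \chi)$. The rank is fixed, $\rk(\pi^T\shf F)=r$, and the framing forces the first Chern class to vanish: since $(\pi^T\shf F)|_L\isoto \bigoplus_i(\OO_L\otimes\rho_i)^{r_i}$ is, after forgetting the equivariant structure, a trivial bundle on $L$, it has degree $0$, so $c_1(\pi^T\shf F)\cdot[L]=0$; as $\Pic(\P^2)=\Z\cdot[L]$ this gives $c_1(\pi^T\shf F)=0$ for every $\shf F\in Y_{\mathbf r,n}$. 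Hence the set $\{P(\pi^T\shf F,\nu)\}$ is finite if and only if $\chi(\P^2,\pi^T\shf F)$ takes only finitely many values.

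Next I would compute this Euler characteristic through the framing exact sequence $0\to\pi^T\shf F(-L)\to\pi^T\shf F\to(\pi^T\shf F)|_L\to 0$, which is exact because $\pi^T\shf F$ is locally free near $L$ by \cref{rmk:locFreeNbhd}. Using the vanishing $H^0(\P^2,\pi^T\shf F(-L))=H^2(\P^2,\pi^T\shf F(-L))=0$ already invoked in the proof of \cref{prop:sameEquivHilbFunct} (via \cite[Lemma 2.4]{Nakajimabook}), additivity of $\chi$ yields
\[ \chi(\P^2,\pi^T\shf F)=\chi\bigl(L,(\pi^T\shf F)|_L\bigr)-\dim H^1(\P^2,\pi^T\shf F\otimes\shf I_L). \]
The first term depends only on the fixed data $\mathbf r$, and $\dim H^1(\P^2,\pi^T\shf F\otimes\shf I_L)=\sum_i v_i\dim\rho_i$ where $\pi^T\shf F\in X_{\mathbf r,\mathbf v}$. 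Since by \cite[Corollary 4.14]{Paper1} we have $v_0=n$ fixed, the whole problem reduces to bounding the nontrivial isotypic multiplicities $v_1,\dots,v_k$ uniformly over $\shf F\in Y_{\mathbf r,n}$.

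This last step is the main obstacle, and it is exactly where the quiver varieties enter. The definition of $Y_{\mathbf r,n}$ pins down only $v_0=n$, so a priori the `instanton charge' recorded by $\dim H^1$ could redistribute among the nontrivial representations $\rho_1,\dots,\rho_k$ without bound. To rule this out I would translate the data of $\pi^T\shf F$ into the ADHM/quiver description of Varagnolo--Vasserot and \cite{Paper1}: a framed equivariant sheaf in $X_{\mathbf r,\mathbf v}$ corresponds to a point of a Nakajima quiver variety for the affine quiver attached to $\Gamma$, with framing determined by $\mathbf r$ and dimension vector $\mathbf v$. The requirement that $\pi^T\shf F$ descend to a torsion-free sheaf $\shf F\in Y_{\mathbf r,n}$ on $\mathcal X$ (that is, $D\circ\pi^T\shf F=\shf F$, by \cref{lem:upanddowniso}) corresponds to a specific stability parameter $\theta_0$; together with $v_0=n$ this forces $\mathbf v$ to lie in a finite set of dimension vectors --- in the cleanest cases the single vector $n\delta$ of \cref{rmk:quiverbij}. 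Finiteness of the possible $\mathbf v$ immediately bounds $\sum_i v_i\dim\rho_i$, hence $\chi(\P^2,\pi^T\shf F)$, completing the proof.
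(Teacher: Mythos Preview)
Your reduction is essentially the same as the paper's: both arguments observe that the ordinary Hilbert polynomial of $\pi^T\shf F$ is determined by the rank together with a single discrete invariant (you phrase it via $c_1=0$ and $\chi$, the paper via $\chi(\pi^T\shf F(-L))$ and $\chi(\pi^T\shf F(-2L))$, which amounts to the same thing), and both reduce to showing that the dimension vector $\mathbf v$ of $\pi^T\shf F\in X_{\mathbf r,\mathbf v}$ ranges over a finite set once $v_0=n$ is fixed.

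The gap is in your final paragraph. The paper does \emph{not} use the descent condition $D\circ\pi^T\shf F=\shf F$ to bound $\mathbf v$; it simply cites Varagnolo--Vasserot to identify $X_{\mathbf r,\mathbf v}$ with the Nakajima quiver variety $\mathfrak M_\theta(\mathbf v,\mathbf r)$ (for the VaVa stability parameter $\theta$), and then invokes \cite[Proposition~3.6]{BertschGyengeSzendroi}, which states that for fixed $\mathbf r$ there are only finitely many $\mathbf v$ with $v_0=n$ for which this quiver variety is nonempty. Your argument instead asserts that the descent condition ``corresponds to a specific stability parameter $\theta_0$'' and that this, together with $v_0=n$, forces $\mathbf v$ into a finite set. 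This conflates two different quiver-variety pictures: the variety $\mathfrak M_{\theta_0}(r,n\delta)$ of \cref{rmk:quiverbij} parametrises $Y_{\mathbf r,n}$ directly (for special $\mathbf r$) with the \emph{fixed} dimension vector $n\delta$, whereas what you need is a constraint on the varying dimension vector $\mathbf v$ of the lifted sheaf $\pi^T\shf F$ in the VaVa picture. You give no argument linking the two, and no independent reason why descent restricts $\mathbf v$ to finitely many values. Without the external input from \cite{BertschGyengeSzendroi} (or an equivalent quiver-theoretic bound), the last step is unsubstantiated.
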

\begin{proof}
	This is because the Hilbert polynomial is completely determined by the rank of $ \pi^T\shf F $ and the Euler characteristics $ \chi(\P^2,\pi^T\shf F(-L))$ and $ \chi(\P^2,\pi^T\shf F(-2L)) $. 
	Let $ \mathbf{v} =H^1(\P^2, \pi^T(\shf F)(-L))$. The Riemann-Roch theorem for surfaces shows that $H^1(\P^2, \pi^T(\shf F)(-L)) = H^1(\P^2, \pi^T(\shf F)(-2L))$. Now $\pi^T\shf F\in X_{\mathbf r, \mathbf{v}}$, which can be identified with the closed points of a specific \emph{Nakajima quiver variety} $\mathfrak{M}_{\theta}(\mathbf v,\mathbf r)$ by \cite[Theorem 1]{VaVa}. By \cite[Proposition 3.6]{BertschGyengeSzendroi}, there is a finite number of vectors $\mathbf v$ satisfying $\mathbf v_0 =n$ such that $\mathfrak{M}_{\theta}(\mathbf v,\mathbf r)$ is nonempty. Thus there are finitely many possible values for $\mathbf v$, so there is a finite number of possible Hilbert polynomials.
\end{proof}
This proof is the only place in the present paper where we explicitly use quiver varieties.\footnote{The lemma raises an unrelated, but interesting problem: Suppose there is a finite group $G$ acting on a scheme $Y$. Let $\shf F$ be a $G$-equivariant sheaf of $\OO_Y$-modules. In general, it is impossible to determine or estimate $H^i(Y, \shf F)$ given $H^i(Y,\shf F)^G$. What conditions must $\shf F$ satisfy before such a prediction becomes possible? The lemma shows that at least torsion-freeness, together with framing and coherence will work.}

We move to the (modified) regularity of the sheaves in $ Y_{\mathbf r,n} $.

\begin{lemma}\label{lem:boundedregularity}
	There is an integer $ p $, depending only on $ r $ and $ n $, such that every framed sheaf $ \shf F\in Y_{\mathbf {r},n} $ is $ p $-regular.
\end{lemma}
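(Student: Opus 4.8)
The plan is to reduce the modified regularity of $\shf F$ on the stack $\mathcal X$ to the ordinary Castelnuovo--Mumford regularity of $\pi^T\shf F$ on $\P^2$, where the boundedness coming out of \cref{lem:boundedHilbPolys} lets me invoke a standard uniform-regularity theorem, and then to transport the resulting bound back across the coarse moduli map $c$.

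First I would unwind \cref{def:CastelnuovoMumford}. Since $\OO_{\mathcal X}(-D)=\shf I_D$ is the dual of the generating sheaf $\OO_{\mathcal X}(D)$, the sheaf $\shf F$ is $m$-regular precisely when $H^i(\mathcal X,\shf F\otimes \shf I_D\otimes c^*\OO_{\P^2/\Gamma}(m-i))=0$ for all $i>0$. The Claim proved inside \cref{cor:sameHilbPoly} gives $\pi^T(\shf F\otimes\shf I_D\otimes c^*\OO_{\P^2/\Gamma}(\ell))=\pi^T\shf F((m\ell-1)L)$, an affine twist of $\pi^T\shf F$ in $\ell$ with positive slope $m$; and the proof of \cref{lem:computingEulerChars} identifies not merely Euler characteristics but each individual cohomology group, $H^i(\mathcal X,D(\shf G))\cong H^i(\P^2,\shf G)^\Gamma$. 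Combining these yields
\[ H^i(\mathcal X,\shf F\otimes\shf I_D\otimes c^*\OO_{\P^2/\Gamma}(m-i))\cong H^i\big(\P^2,\pi^T\shf F((m(m-i)-1)L)\big)^\Gamma. \]
Working in characteristic $0$, $H^i(-)^\Gamma$ is a subspace (indeed a direct summand) of $H^i(-)$, so it suffices to make the \emph{full} groups $H^i(\P^2,\pi^T\shf F((m(m-i)-1)L))$ vanish for $i=1,2$.

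The heart of the argument is then to bound the ordinary regularity of $\pi^T\shf F$ uniformly over $\shf F\in Y_{\mathbf r,n}$. As in the proof of \cref{lem:boundedHilbPolys}, each $\pi^T\shf F$ lies in one of the finitely many nonempty moduli spaces $\mathbf X_{\mathbf r,\mathbf v}$ with $\mathbf v_0=n$ (\cref{rmk:SchStructX}); each of these is a quasiprojective scheme of finite type carrying a universal framed sheaf, so after forgetting the $\Gamma$-structure the collection $\{\pi^T\shf F\mid \shf F\in Y_{\mathbf r,n}\}$ is a bounded family of coherent sheaves on $\P^2$ in the sense of \cref{rem:bounded}. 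By Kleiman's boundedness theorem (see \eg \cite[Section 1.7]{HuybrechtsLehn}), a bounded family has uniformly bounded regularity: there is an integer $p_0$, depending only on $\mathbf r$ and $n$, with every $\pi^T\shf F$ being $p_0$-regular on $\P^2$.

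Finally I would combine the two steps. Once $\pi^T\shf F$ is $p_0$-regular, Mumford's lemma gives $H^i(\P^2,\pi^T\shf F(jL))=0$ for all $i>0$ and all $j\ge p_0-i$. Because the twist $m(m-i)-1$ grows quadratically in $m$ while $p_0-i$ is essentially constant, there is an integer $p$ depending only on $p_0$ --- hence only on $\mathbf r$ and $n$ --- with $p(p-i)-1\ge p_0-i$ for $i=1,2$; for this $p$ the vanishing of the second paragraph holds and every $\shf F\in Y_{\mathbf r,n}$ is $p$-regular. I expect the main obstacle to be the bookkeeping of the first reduction: one must verify carefully that the per-$i$ identification $H^i(\mathcal X,D(\shf G))\cong H^i(\P^2,\shf G)^\Gamma$ is valid (and not merely the Euler-characteristic statement of \cref{lem:computingEulerChars}) and keep precise track of the factor $m$ coming from $\pi^*c^*\OO_{\P^2/\Gamma}(1)=\OO_{\P^2}(mL)$, so that regularity measured against $\OO_{\P^2/\Gamma}(1)$ on the coarse space is genuinely governed by regularity against $\OO_{\P^2}(L)$. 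An alternative to the last two steps would be to transport the bounded family across the exact functor $D$ and apply Nironi's regularity theory for bounded families on projective Deligne--Mumford stacks \cite{Nironi} directly on $\mathcal X$.
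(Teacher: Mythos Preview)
Your proposal is correct and follows essentially the same route as the paper: reduce modified regularity on $\mathcal X$ to ordinary regularity of $\pi^T\shf F$ on $\P^2$ via the per-degree identification $H^i(\mathcal X,D(\shf G))\cong H^i(\P^2,\shf G)^\Gamma$, establish a uniform regularity bound $p'$ for the $\pi^T\shf F$'s by boundedness, and then choose $p$ large enough that the affine reparametrisation $m(\,\cdot\,-i)-1$ lands above $p'-i$. The only substantive difference is how boundedness is obtained: the paper partitions $\{\pi^T\shf F\}$ by (ordinary) Hilbert polynomial, invokes \cite[Theorem~2.5]{BruMar} on each stratum, and then takes the maximum over the finitely many strata from \cref{lem:boundedHilbPolys}; you instead note that the $\pi^T\shf F$ live in finitely many fine moduli schemes $\mathbf X_{\mathbf r,\mathbf v}$ carrying universal sheaves and apply Kleiman directly. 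Both are valid and equally short.

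One cosmetic point to clean up: you use the symbol $m$ simultaneously for the fixed integer with $\pi^*c^*\OO_{\P^2/\Gamma}(1)=\OO_{\P^2}(mL)$ and for the regularity variable, which leads to the spurious claim that the twist ``grows quadratically''. With distinct symbols (say $m$ for the constant and $\ell$ for the variable, as the paper's $m'$), the twist $m(\ell-i)-1$ is linear in $\ell$, and the inequality you need is simply $m(p-i)-1\ge p_0-i$ for $i=1,2$, which is what the paper writes as $p>p'/m+2$.
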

By \cref{def:CastelnuovoMumford}, we need to show that there is an $ p\in \Z $ such that for any $ \shf F\in Y_{\mathbf{r},n} $, the groups

\[ H^i(\mathcal X, \shf F\otimes \shf I_{D}\otimes c^*\OO_{\P^2/\Gamma}(p-i)) \]  vanish for all $ i $.

For any numerical polynomial $ P\in \Q[\nu] $, set $Y_P\subset Y_{\mathbf r,n}$ to be the subset given by
\[Y_P =\{\pi^T\shf F\mid \shf F\in Y_{\mathbf r,n}, P(\pi^T\shf F, \nu)=P\} .\]

Then, by \cite[Theorem 2.5]{BruMar}, each $ Y_P $ is bounded. This means that given a polynomial $ P $, each $ \pi^T\shf F\in Y_P $ is $ p_P $-regular for some $ p_P $ depending on $ P $. But since the set of nonempty $ Y_P $ is finite by \cref{lem:boundedHilbPolys},  $ \pi^T\shf F $ is $ p' \coloneqq \max\{p_P\} $-regular for each $ \shf F\in Y_{\mathbf{r},n} $.
From the proof of \cref{cor:sameHilbPoly}, we know that the cohomology groups 
\[ H^i(\mathcal X, \shf F\otimes \shf I_{D}\otimes c^*\OO_{\P^2/\Gamma}(m'-i))\] 
vanish when the groups 
\begin{equation}\label{eq:cohomgroups} H^i(\P^2, \pi^T(\shf F\otimes\shf I_{D}\otimes c^*\OO_{\P^2/\Gamma}(m'-i))) =H^i(\P^2, \pi^T\shf F\otimes \OO_{\P^2}\big((m(m'-i)-1)\big))
\end{equation} do. 

We set $ p$ to be an integer greater than $ p'/m+2 $.
Then, for $ i=1 $ \eqref{eq:cohomgroups} becomes 
\[ H^1(\P^2, \pi^T\shf F\otimes \OO_{\P^2}((mm'-m-1)).\] If we choose $ m'>p $, then $ mm'-m>p' $, so by $ p' $-regularity of $ \pi^T\shf F $, this group vanishes.
A similar computation shows that if $ m'>p $, \[ H^2(\P^2, \pi^T\shf F\otimes \OO_{\P^2}((mm'-2m-2))=0 .\]
It follows that all $ \shf F\in Y_{\mathbf{r},n} $ are $ p $-regular.

\subsection{Stability of framed sheaves on $ \mathcal X $}

We need one more ingredient before we can construct the moduli space: we need the sheaves in $ Y_{\mathbf r,n} $ to fulfill a $ \hat\mu $-stability condition.
\begin{lemma}\label{lem:sheavesInYrnAreStable}
	There is a $ \delta_1\in \Q $ such that each $ \shf F\in Y_{\mathbf r,n} $ is $ \hat{\mu} $-stable with respect to $ \delta_1 $.
\end{lemma}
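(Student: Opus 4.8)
The plan is to reduce the $\hat\mu$-stability of any $\shf F\in Y_{\mathbf r,n}$ on $\mathcal X$ to a statement about its image $\pi^T\shf F$ on $\P^2$, where stability is well understood. Recall from \cref{def:muHatStability} that $\hat\mu$-stability with respect to $\delta_1$ involves (i) torsion-freeness of $\ker\phi_{\shf F}$, and (ii)-(iii) inequalities comparing the ratios $\alpha_{\shf V, d-1}/\alpha_{\shf V, d}$ of proper framed subsheaves against that of $\shf F$ (with the framing correction $\delta_1$). The key observation is that because $\shf R$ is supported entirely on $D$, every element of $Y_{\mathbf r,n}$ has nonzero framing morphism, so $\varepsilon(\phi_{\shf F})=1$ throughout; moreover since $\phi_{\shf F}$ is an \emph{isomorphism} onto $\shf R$, the kernel of the framing is simply $\shf F\otimes\shf I_D$, which is torsion-free because $\shf F$ is. This immediately settles condition (1).

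For conditions (2) and (3), I would first translate the leading coefficients $\alpha_{\shf V, d}(\shf F)$ and $\alpha_{\shf V, d-1}(\shf F)$ of the modified Hilbert polynomial into the data computed in \cref{cor:sameHilbPoly}. Since $d=\dim\mathcal X = 2$, condition (2) compares, for each subsheaf $\shf G'\subset\ker\phi_{\shf F}=\shf F\otimes\shf I_D$, the slope-like quantity $\alpha_{\shf V,1}(\shf G')/\alpha_{\shf V,2}(\shf G')$ against $(\alpha_{\shf V,1}(\shf F)-\delta_1)/\alpha_{\shf V,2}(\shf F)$, and similarly for condition (3). The strategy is to choose $\delta_1$ large enough that these inequalities are forced. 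Concretely, any destabilising subsheaf would have to violate the corresponding inequality for \emph{every} admissible $\delta_1$, so I would argue that the set of possible leading-coefficient data of subsheaves $\shf G'$ of sheaves in $Y_{\mathbf r,n}$ is controlled: the boundedness established in \cref{lem:boundedHilbPolys} and \cref{lem:boundedregularity} guarantees that the relevant numerical invariants of the $\pi^T\shf F$ (hence, via \cref{lem:computingEulerChars} and \cref{cor:sameHilbPoly}, of $\shf F$ itself) range over a finite set. This finiteness is what lets a single $\delta_1$ work uniformly across all of $Y_{\mathbf r,n}$.

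The main obstacle I anticipate is handling the full range of subsheaves $\shf G'$ rather than just saturated or framed ones: condition (3) ranges over \emph{all} $\shf G'\subset\shf F$ with strictly smaller leading coefficient, and there is no a priori finiteness for such subsheaves. The standard device, following Bruzzo--Sala and ultimately Huybrechts--Lehn, is that it suffices to test stability against subsheaves whose quotient $\shf F/\shf G'$ is torsion-free (i.e. against saturated $\shf G'$), since passing to the saturation only increases $\alpha_{\shf V,1}(\shf G')$ while fixing $\alpha_{\shf V,2}(\shf G')$. I would invoke this reduction, and then bound the $\mu$-slopes $\alpha_{\shf V,1}(\shf G')/\alpha_{\shf V,2}(\shf G')$ of saturated subsheaves from above. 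The cleanest route is to transport the problem to $\P^2$: a proper subsheaf $\shf G'\subset\shf F$ pulls back to a $\Gamma$-equivariant subsheaf of $\pi^T\shf F$, and since the framed sheaves $\pi^T\shf F$ lie in the bounded family $X_{\mathbf r,\mathbf v}$ (with finitely many $\mathbf v$), their subsheaf slopes are bounded above by a constant depending only on $\mathbf r$ and $n$.

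I would therefore conclude by fixing $\delta_1$ to exceed this uniform slope bound by a controlled margin, and verifying that with this choice both (2) and (3) hold strictly for every proper framed subsheaf of every $\shf F\in Y_{\mathbf r,n}$, which is exactly $\hat\mu$-stability. The delicate point to get right will be the bookkeeping of the $\delta_1$ correction term in condition (3) when $\shf G'$ is not contained in $\ker\phi_{\shf F}$ versus when it is --- these two cases carry different $\varepsilon$ and $\delta_1$ contributions, and the inequality must be checked separately in each regime, but in both the boundedness of the subsheaf slopes makes a sufficiently large $\delta_1$ work.
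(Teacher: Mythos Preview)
Your overall strategy matches the paper's: obtain a uniform upper bound $C$ on the modified slopes of subsheaves $\shf G'\subset\shf F$ as $\shf F$ ranges over $Y_{\mathbf r,n}$, and then choose $\delta_1$ large relative to $C$ and the fixed invariants of $\shf F$. Condition~(1) of \cref{def:muHatStability} is indeed immediate, as you say.

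Where you diverge from the paper is in how the slope bound is obtained. The paper does \emph{not} transport the problem to $\P^2$; it applies the stacky Grothendieck lemma of Nironi directly on $\mathcal X$. Since all $\shf F\in Y_{\mathbf r,n}$ share the same modified Hilbert polynomial (\cref{cor:sameHilbPoly}) and are all $p$-regular (\cref{lem:boundedregularity}), that lemma produces a constant $C$ with $\hat\mu(\shf G')\le C$ for every purely $2$-dimensional subsheaf $\shf G'$ of every such $\shf F$. The rest is an explicit manipulation of the inequalities in \cref{def:muHatStability} to read off how large $\delta_1$ must be.

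Your route through $\P^2$ has a genuine gap. The quantity you must bound is the \emph{modified} slope $\alpha_{\shf V,1}(\shf G')/\alpha_{\shf V,2}(\shf G')$, which is intrinsic to $\mathcal X$. The identifications in \cref{lem:computingEulerChars} and the proof of \cref{cor:sameHilbPoly} let one compute the modified Hilbert polynomial of $\shf F$ in terms of $\pi^T\shf F$ on $\P^2$, but that computation rests on \cref{lem:upanddowniso}, namely $D\circ\pi^T(\shf F)=\shf F$, which is proved only for $\shf F\in Y_{\mathbf r,n}$ and not for an arbitrary coherent subsheaf $\shf G'\subset\shf F$. So even if you bound the ordinary slopes of $\Gamma$-equivariant subsheaves of $\pi^T\shf F$ on $\P^2$, you have no mechanism to convert that back into a bound on the modified slope of $\shf G'$ on $\mathcal X$. (There is also a secondary issue: $\pi^*$ is only right exact, so $\pi^*\shf G'\to\pi^*\shf F$ need not be injective; passing to the image is possible, but the relationship between that image and the numerical invariants of $\shf G'$ then becomes even murkier.) The stacky Grothendieck lemma sidesteps all of this by working on $\mathcal X$ from the start.
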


\begin{proof}
	Let $ \shf F\in Y_{\mathbf r,n} $.
	
	Because we know that the Hilbert polynomial $ P_{\OO_{X}(D)}(\shf F, \nu) $ is independent of $ \shf F $, and they are all $ p $-regular,
	we can apply the \emph{Grothendieck lemma} in its stacky version \cite[Lemma 4.13, Remark 4.14]{Nironi}. This tells us that there is a constant $ C $, depending only on the modified Hilbert polynomial and modified Castelnuovo-Mumford regularity of $ \shf F $, such that any purely 2-dimensional subsheaf $\shf F'\subset \shf F $ satisfies $ \hat{\mu}(\shf F')\le C $. Fix such a $ C $.
	
	Let then $ \shf F'\subset \shf F $. Because $ \shf F $ is torsion-free, $ \shf F' $ is pure of dimension 2. Now, the modified Hilbert polynomial and the modified Castelnuovo-Mumford regularity of $ \shf F $ do not depend on the choice of $ \shf F $, and thus neither does its framed hat-slope $ \hat{\mu}(\shf F) $. We will thus write $ \hat{\mu}\coloneqq\hat{\mu}(\shf F) $.
	
	We can rewrite condition 2 in \cref{def:muHatStability} to
	\begin{equation}\label{eqn:muHatStableOnX}
		\hat{\mu}(\shf F')-\hat\mu(\shf F) \le \delta_1\left(\frac{1}{\alpha_{\OO_{\mathcal X}(D), 2}(\shf F')}-\frac{1}{\alpha_{\OO_{\mathcal X}(D), 2}(\shf F)}\right).\end{equation}
	
	 We then have $ \hat\mu{(\shf F')}\le C $, which implies that \cref{eqn:muHatStableOnX} will hold whenever \[ \delta_1\ge \frac{(C-\hat{\mu})(\alpha_{\OO_{\mathcal X}(D), 2}(\shf F')\alpha_{\OO_{\mathcal X}(D), 2}(\shf F))}{\alpha_{\OO_{\mathcal X}(D), 2}(\shf F)-\alpha_{\OO_{\mathcal X}(D), 2}(\shf F')}\]
	
	Note that the maximal value of the right-hand side is attained when $ \alpha_{\OO_{\mathcal X}(D), 2}(\shf F') = \alpha_{\OO_{\mathcal X}(D), 2}(\shf F)-1 $.
	It follows that if we set 
	\[\delta_1\ge {(C-\hat{\mu})\big((\alpha_{\OO_{\mathcal X}(D), 2}(\shf F)-1\big)\alpha_{\OO_{\mathcal X}(D), 2}(\shf F))},\] condition 2 of \cref{def:muHatStability} will always hold.
	
	Arguing similarly for condition 1) of \cref{def:muHatStability}, we find that it will hold if \[ \delta_1 \ge (C-\hat{\mu})(\alpha_{\OO_{\mathcal X}(D), 2}(\shf F)).\]
	It is thus sufficient to pick
	\[ \delta_1\ge \max{\left\{{(C-\hat{\mu})\big((\alpha_{\OO_{\mathcal X}(D), 2}(\shf F)-1\big)\alpha_{\OO_{\mathcal X}(D), 2}(\shf F))},\ (C-\hat{\mu})(\alpha_{\OO_{\mathcal X}(D), 2}(\shf F))\right\}} .\]

\end{proof}
It follows that there is a $ \delta(\nu)\in \Q[\nu] $ such that the sheaves in $ Y_{\mathbf r,n} $ are stable with respect to $ \delta $.

We are finally ready to prove \cref{thm:main}:

\begin{proof}[Proof of \cref{thm:main}]
	 Let $ P $ be the Hilbert polynomial common to all elements of $ Y_{\mathbf r,n} $, which we know exists by \cref{cor:sameHilbPoly}, and let $ \delta(\nu) =\delta_1\nu+\delta_0$ be a linear polynomial such that all elements of  $ Y_{\mathbf r,n} $ are $ \hat{\mu} $-stable with respect to $ \delta_1 $. By  \cref{lem:sheavesInYrnAreStable},such a $ \delta $ exists. Then, by \cref{thm:ModSpaceExists}, there exists a quasiprojective scheme \[{\mathbf M}_{\mathcal X}(\OO_{\mathcal X}(D), \OO_{\P^2/\Gamma}(1), P, i_*\shf R, \delta(\nu))\] parametrising $ i_*\shf R $-framed sheaves $ (\shf F, \phi_{\shf F})$ that are $ \delta(\nu) $-stable, and have modified Hilbert polynomial equal to $ P $. 
	 It is clear that there is a subscheme \[  \mathbf{Y}_{\mathbf r,n} \subset{\mathbf M}_{\mathcal X}(\OO_{\mathcal X}(D), \OO_{\P^2/\Gamma}(1), P, i_*\shf R, \delta(\nu))\] representing  the functor from \cref{thm:main}.
	 
	Now, given an $i_*\shf R $-framed sheaf $ (\shf F, \phi_{\shf F})$, the framing morphism $ \phi_{\shf F}\colon \shf F\to i_*\shf{R} $ may not necessarily be the adjoint of an \emph{isomorphism} $  i^*\shf F=\shf F|_{L}\isoto \shf R $. Furthermore, $ \shf F $ may not be torsion-free. It is, however, easy to see that if $ \phi_{\shf F} $ is the adjoint of an isomorphism, the isomorphism class of the pair $ (\shf F, \phi_{\shf F})$ lies in $ \mathbf Y_{\mathbf r,n} $.	So to show that $ {\mathbf{Y}_{{\mathbf r},n}} $ is quasiprojective, we only need to show that the locus $ \mathbf{Y}_{\mathbf r,n}\subset {\mathbf M}_{\mathcal X}(\OO_{\mathcal X}(D), \OO_{\P^2/\Gamma}(1), P, i_*\shf{R}, \delta(\nu)) $ is open.

	There are two parts to this; we will show that
	\begin{itemize}
		\item the torsion-freeness of $ (\shf F, \phi_{\shf F})\in {\mathbf M}_{\mathcal X}(\OO_{\mathcal X}(D), \OO_{\P^2/\Gamma}(1), P, i_*\shf R, \delta(\nu))$ is an open condition;
		\item that the framing morphism $\phi_{\shf F}$ should restrict to an isomorphism on $D$ is an open condition.
	\end{itemize}
	For torsion-freeness, this is simply \cite[Proposition 2.31]{BruzzoSala}. (Had we been working with schemes instead of projective stacks, this would have been \cite[Proposition 2.1]{Maruyama})
	
	As for the second point, every flat family of framed sheaves $ \{(\shf F, \phi_{\shf F}\colon i^*\shf F\to \shf R)\} $ on $ \mathcal{X} $ restricts to a family of morphisms $ \{\phi_{\shf F}\colon i^*\shf F\to \shf R\} $ on $ D $. In such a family, the condition $\coker \phi_{\shf F}=0  $ is open by semicontinuity, and on the locus where $ \phi_{\shf F} $ is surjective, $ \ker \phi_{\shf F}=0 $ is similarly an open condition.
	
\end{proof}
%

\begin{corollary}
	The functor $ D $ from \cref{def:UpDownFunctors} induces a scheme morphism
	\[ \mathbf X_{\mathbf r,\mathbf v}\to \mathbf Y_{\mathbf r,n} \] for any $ \mathbf v\in \Z_{\ge 0}^{Q_{\Gamma, 0}} $ such that $ \mathbf v_0=n $.
\end{corollary}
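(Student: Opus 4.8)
The plan is to exploit the fact, just established in \cref{thm:main}, that $\mathbf Y_{\mathbf r,n}$ is a \emph{fine} moduli space. By its universal property, giving a morphism $\mathbf X_{\mathbf r,\mathbf v}\to \mathbf Y_{\mathbf r,n}$ is the same as exhibiting an element of $\mathbf Y_{\mathbf r,n}(\mathbf X_{\mathbf r,\mathbf v})$, that is, a flat family over $\mathbf X_{\mathbf r,\mathbf v}$ of framed sheaves lying in $Y_{\mathbf r,n}$. I would produce such a family by applying the functor $D$, relativised over the base, to the universal framed sheaf on $\mathbf X_{\mathbf r,\mathbf v}$. First I would recall that $\mathbf X_{\mathbf r,\mathbf v}$ is itself a fine moduli space: since the framing rigidifies the objects, the framed moduli space $X_{r,v}$ of \cite{HuybrechtsLehnModuli} carries a universal framed sheaf, and $\mathbf X_{\mathbf r,\mathbf v}$ is a closed subscheme of it (\cref{rmk:SchStructX}); restricting the universal object gives a $\Gamma$-equivariant framed sheaf $(\mathcal E,\Phi)$ on $\P^2\times \mathbf X_{\mathbf r,\mathbf v}$, flat over the base, with $\Gamma$ acting trivially on the second factor.

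Next I would define the relative version of $D$. Recall that the functor $D$ of \cref{def:UpDownFunctors} factors as the descent equivalence $\ECoh{\Gamma}{\P^2}\simeq \Coh{[\P^2/\Gamma]}$ along $k$, followed by the pushforward $q_*$ along $q\colon [\P^2/\Gamma]\to\mathcal X$. Both steps make sense in families over a base $S$: descent along $k\times\id_S$ produces a sheaf on $[\P^2/\Gamma]\times S$, and $(q\times\id_S)_*$ pushes it forward to $\mathcal X\times S$. Applying this to $(\mathcal E,\Phi)$ yields a coherent sheaf $\mathcal F$ on $\mathcal X\times\mathbf X_{\mathbf r,\mathbf v}$ together with an induced framing along $D$, coming from $\Phi$ exactly as in the lemma preceding \cref{lem:upanddowniso}.

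The heart of the argument — and the step I expect to be the main obstacle — is to show that $\mathcal F$ is flat over $\mathbf X_{\mathbf r,\mathbf v}$ and that its formation commutes with base change, so that the fibre of $\mathcal F$ over a point $\shf E\in \mathbf X_{\mathbf r,\mathbf v}$ is precisely $D(\shf E)$. Here I would use that $D$ is exact (the final assertion of the lemma preceding \cref{lem:upanddowniso}) together with the fact that $q$ is finite, so that $q_*$ is exact and $R^{>0}q_*=0$; the latter makes $q_*$ commute with arbitrary base change, while the descent step is an equivalence that manifestly preserves $S$-flatness. Combining these, I would check flatness over the base by the local criterion (Tor-vanishing against residue fields of $\mathbf X_{\mathbf r,\mathbf v}$) and simultaneously identify the fibres as $D(\shf E)$. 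Once the fibres are so identified, the cited lemma gives $D(\shf E)\in Y_{\mathbf r,n}$ with $n=v_0=\mathbf v_0$, so $\mathcal F$, framed along $D$ via $\Phi$, is a flat family of objects of $Y_{\mathbf r,n}$. The universal property of $\mathbf Y_{\mathbf r,n}$ then yields the classifying morphism $\mathbf X_{\mathbf r,\mathbf v}\to \mathbf Y_{\mathbf r,n}$, and the naturality of the whole construction in the base confirms that it is genuinely a morphism of schemes and not merely a map on $\C$-points.
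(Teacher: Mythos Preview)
Your proposal follows essentially the same route as the paper: apply $D$ relatively to the universal framed sheaf on $\mathbf X_{\mathbf r,\mathbf v}$, check that the result is a flat family of objects in $Y_{\mathbf r,n}$, and then invoke the universal property of $\mathbf Y_{\mathbf r,n}$. The only substantive difference lies in how flatness over the base is verified. The paper argues locally on $\mathcal X$: away from the singular point $p$ the functor $D$ is an equivalence and there is nothing to check, while over a neighbourhood of $p$ the question reduces to the elementary observation that if $M$ is a $\Gamma$-equivariant $\C[x,y]\otimes R$-module flat over $R$, then $M^\Gamma$ is a direct summand of $M$ (via the Reynolds operator) and hence also $R$-flat.

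Your more abstract argument via exactness of $q_*$ and cohomology-and-base-change reaches the same conclusion, but one caution: the morphism $q\colon[\P^2/\Gamma]\to\mathcal X$ is \emph{not} finite---indeed it is not representable, its fibre over $p$ being $B\Gamma$---so the phrase ``$q$ is finite'' is not quite right. What you actually need is that $q$ is (locally) a coarse moduli morphism for a tame Deligne--Mumford stack, from which exactness of $q_*$ and vanishing of $R^{>0}q_*$ follow. With that correction your argument goes through, and the paper's direct-summand computation is precisely the concrete incarnation of this fact.
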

\begin{proof}
	It is enough to show that applying $D$ to the universal family $ \mathcal U $ on $ \mathbf X_{\mathbf r,\mathbf v} $ induces a flat family of sheaves on $ \mathbf Y_{\mathbf r,n} $. 
	So $ \mathcal U $ is a sheaf on $ \P^2\times \mathbf X_{\mathbf r,\mathbf v} $. Since $ D $ works relatively, we have a sheaf $ D(\mathcal U) $ on $ \mathcal X\times \mathbf X_{\mathbf r,\mathbf v} $, and we only need to show that it is flat over $ \mathbf X_{\mathbf r,\mathbf v} $. To see this, we show that $ D(\mathcal U)|_{V_D\times \mathbf X_{\mathbf r,\mathbf v}} $ and $ D(\mathcal U)|_{V_p\times \mathbf X_{\mathbf r,\mathbf v}}  $ are flat over $ \mathbf X_{\mathbf r,\mathbf v} $. The first is clear, while the second reduces to showing the following claim: 
\emph{
	If $ M $ be a $ \Gamma $-equivariant $\C[x,y]\otimes R $-module (for some ring $ R $) which is flat as a $ R $-module, then $ M^\Gamma $ is a $ \C[x,y]^\Gamma\otimes R $-module flat as an $ R $-module.}

	To show the claim, let $ 0\to K \to E \to F\to 0 $ be a short exact sequence of $  R $-modules. Then there is a commutative diagram with exact rows
	\[\begin{tikzcd}
	& M^\Gamma\otimes K \arrow[d] \arrow[r, "j"] & M^\Gamma\otimes E \arrow[d] \arrow[r] & M^\Gamma\otimes F \arrow[d] \arrow[r] & 0 \\
	0 \arrow[r] & M\otimes K \arrow[r]                  & M\otimes E \arrow[r]                  & M\otimes F \arrow[r]                  & 0
	\end{tikzcd}.\]
	
	Here the downward homomorphisms are injective because $ M^\Gamma $ is a direct summand of $ M $. This shows that $ j $ is injective, so $ M^\Gamma $ is flat.

\end{proof}

Finally, we show that the schemes $\mathbf Y_{\mathbf r, n}$ indeed generalise the punctual Hilbert scheme $\Hilb^n(\C^2/\Gamma)$, as mentioned in the Introduction. Let $\mathbf{1}$ be the vector $(1,0,\dots, 0)$.
\begin{corollary}\label{cor:rank1Example}
	There is an isomorphism $ \mathbf Y_{\mathbf 1,n}\isoto \Hilb^n(\C^2/\Gamma) $.
\end{corollary}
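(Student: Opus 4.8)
The plan is to realise every framed sheaf in $Y_{\mathbf 1,n}$ as the ideal sheaf of a length-$n$ subscheme of $\C^2/\Gamma$, and conversely, thereby producing mutually inverse morphisms between the fine moduli scheme $\mathbf Y_{\mathbf 1,n}$ and $\Hilb^n(\C^2/\Gamma)$. With $\mathbf r = \mathbf 1 = (1,0,\dots,0)$ we have $r=1$ and $\shf R = \shf R_0 = \OO_D$, so a $\C$-point of $Y_{\mathbf 1,n}$ is a rank-$1$, torsion-free $\shf F$ on $\mathcal X$ equipped with $\phi_{\shf F}\colon \shf F|_D\isoto \OO_D$ and satisfying $\dim H^1(\mathcal X,\shf F\otimes \shf I_D)=n$.

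First I would identify the objects. Given $\shf F\in Y_{\mathbf 1,n}$, the sheaf $\pi^T\shf F$ lies in some $X_{\mathbf 1,\mathbf u}$ with $u_0=n$; it is a rank-$1$, torsion-free, $\Gamma$-equivariant sheaf on $\P^2$ framed by $\OO_L\otimes\rho_0=\OO_L$. By the classical rank-one case of the description of framed sheaves on $\P^2$ (\cite{Nakajimabook}, in the equivariant form of \cite{VaVa}), such a sheaf is the $\Gamma$-equivariant ideal sheaf $\shf I_{\tilde Z}$ of a $\Gamma$-invariant $0$-dimensional subscheme $\tilde Z\subset \C^2$: its reflexive hull $\shf F^{\vee\vee}$ is a line bundle on the smooth surface $\P^2$, and the framing by $\OO_L$ forces this line bundle to be $\OO_{\P^2}$. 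I would then apply the exact functor $D$ to the inclusion $\shf I_{\tilde Z}\injto \OO_{\P^2}$. Since $D$ is exact, $D\circ\pi^T\shf F=\shf F$ by \cref{lem:upanddowniso}, and $D(\OO_{\P^2})=\OO_{\mathcal X}$, this produces a canonical inclusion $\shf F\injto \OO_{\mathcal X}$ whose cokernel $D(\OO_{\tilde Z})$ is $0$-dimensional and supported in $\C^2/\Gamma$; as $\tilde Z$ avoids $L$, the inclusion is the identity in a neighbourhood of $D$, compatibly with $\phi_{\shf F}$. Hence $\shf F=\shf I_Z$ for a unique $Z\subset \C^2/\Gamma$, whose length is $\dim (\OO_{\tilde Z})^\Gamma=\chi_0(\P^2,\OO_{\tilde Z})$. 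By the computation in the proof of \cref{cor:sameHilbPoly} together with \cref{lem:computingEulerChars}, this number equals $\dim H^1(\mathcal X,\shf F\otimes\shf I_D)=n$.

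With this in hand I would set up the two morphisms. The assignment $\alpha\colon\shf F\mapsto Z$ is obtained by applying the above construction to the universal family on $\mathbf Y_{\mathbf 1,n}$, producing a flat family of length-$n$ subschemes of $\C^2/\Gamma$ and hence, by the universal property of $\Hilb^n(\C^2/\Gamma)$, a morphism $\mathbf Y_{\mathbf 1,n}\to\Hilb^n(\C^2/\Gamma)$. Conversely $\beta\colon Z\mapsto \shf F$ takes the ideal sheaf $\shf I_Z\subset\OO_{\C^2/\Gamma}$ of the universal subscheme and glues it to $\OO_{\mathcal X}$ over a neighbourhood of $D$ (where $\shf I_Z=\OO$, since $Z$ is supported away from $D$), with the tautological framing $\shf F|_D=\OO_D$; the resulting $\shf F$ is torsion-free of rank $1$, its $H^1$-invariant equals $n$ by the same computation, and the gluing is flat over the base, so $\beta$ defines a morphism $\Hilb^n(\C^2/\Gamma)\to\mathbf Y_{\mathbf 1,n}$. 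The two round-trips are the identity: $\beta\alpha$ reconstructs $\shf F$ from the inclusion $\shf F\injto\OO_{\mathcal X}$, which is the identity near $D$, while $\alpha\beta$ returns $Z$ because $\shf I_Z$ cuts out $Z$ by construction. Thus $\alpha$ and $\beta$ are inverse isomorphisms of schemes.

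The main obstacle is the step showing that $\shf F$ is genuinely an ideal sheaf, i.e.\ that $\shf F^{\vee\vee}\cong\OO_{\mathcal X}$ rather than one of the nontrivial rank-$1$ reflexive modules carried by the Kleinian singularity $\C^2/\Gamma$ (the indecomposable reflexive modules of the McKay correspondence). Arguing directly on $\mathcal X$ would require computing the relevant divisor class group and checking that the framing along $D$ trivialises the class of $\shf F^{\vee\vee}$; passing instead to $\P^2$, where $\Pic(\P^2)=\Z$ and the framing by $\OO_L$ pins the degree to $0$, sidesteps this entirely, after which the exactness of $D$ and the identity $D(\OO_{\P^2})=\OO_{\mathcal X}$ transport the inclusion back to $\mathcal X$. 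The remaining points — flatness of the two families and the matching of the $H^1$-invariant with the length of $Z$ — are routine given \cref{cor:sameHilbPoly} and the fact that $D$ works relatively.
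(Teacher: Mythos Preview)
Your proposal is correct and follows essentially the same route as the paper: lift $\shf F$ to $\P^2$ via $\pi^T$, invoke Nakajima's rank-one result to identify $\pi^T\shf F$ with an ideal sheaf, push the inclusion $\shf I_{\tilde Z}\hookrightarrow\OO_{\P^2}$ back down via the exact functor $D$ (using $D(\OO_{\P^2})=\OO_{\mathcal X}$ and \cref{lem:upanddowniso}), and then run both directions relatively. The only noticeable difference is in the length computation: the paper tensors the ideal-sheaf sequence $0\to\shf F\to\OO_{\mathcal X}\to\OO_{\mathcal X}/\shf F\to 0$ by $\shf I_D$ and reads off $h^1(\shf F\otimes\shf I_D)=h^0(\OO_{\mathcal X}/\shf F)$ directly, whereas you route the same identity through $\chi_0(\P^2,\OO_{\tilde Z})$ and the Euler-characteristic machinery; the paper's argument is more self-contained here, and your citation of \cref{cor:sameHilbPoly} is a little loose (what you actually need is the tensored long exact sequence, as in the paper), but the conclusion is the same.
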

\begin{proof}
Let $ \shf F\in Y_{\mathbf 1,n} $, we show that $ \shf F|_{U_p} \in \Hilb^n(\C^2/\Gamma)$.
	Then $ \pi^T\shf F $ is an element of some $ X_{1, \mathbf v} $ with $ \mathbf v_0=n $, especially it is by \cite[Proposition 2.8]{Nakajimabook} an ideal sheaf on $ \P^2 $.
	It is easy to show that $D(\OO_{\P^2}) = \OO_{\mathcal{X}}$. Now $ D(\pi^T\shf F)\cong \shf F $ by \cref{lem:upanddowniso}, by which it follows (since $D$ is exact) that $ \shf F $ is an ideal sheaf on $ \mathcal X $.
	We can tensor the ideal sheaf sequence for $ \shf F $ with $ \shf I_{D} $, obtaining  \[ 0\to \shf F\otimes \shf I_{D}\to \shf I_{D}\to \OO_{\mathcal{X}}/\shf F\otimes \shf I_{D}\to 0 \] which is a short exact sequence because $ \shf I_{D} $ is locally free. Because $ \shf F $ is trivial along $ D $, it is the ideal sheaf of a zero-dimensional subscheme not intersecting $ D $, and so we have $ \OO_{\mathcal{X}}/\shf F\otimes \shf I_{D} \cong \OO_{\mathcal{X}}/\shf F$. Taking the long exact sequence of cohomology, we find that \[ h^1(\mathcal{X}, \shf F\otimes \shf I_{D}) =h^0(\OO_{\mathcal{X}}/\shf F)=n,\] so $ \shf F $ is indeed a point of $ \Hilb^n(\C^2/\Gamma) $.
	
	In the other direction, take a colength $ n $ zero-dimensional subscheme $Z\subset \C^2/\Gamma $, and let $ \shf I_Z $ be the ideal sheaf of $ Z $ as a substack of $ \mathcal X $. It is straightforward to check that $ \shf I_Z\in Y_{\mathbf 1,n} $, and this gives the inverse map to the previous construction.
	Both these constructions work relatively, and so we get an isomorphism $ \mathbf Y_{\mathbf 1, n}\isoto \Hilb^n(\C^2/\Gamma) $.
\end{proof}

\begin{remark}\label{rmk:quiverbij}
	In \cite{Paper1}, we showed that  given two positive integers $(r,n)$, there is a {Nakajima quiver variety} $\mathfrak M_{\theta_0}(r, n\delta)$, the closed points of which parametrise $(D, \OO_D^{\oplus r})$-framed coherent torsion-free sheaves of rank $r$ on $\mathcal{X}$. It follows that, if we set $\mathbf r = (r, 0,\dots,0)$, there is a bijection on the level of closed points $\mathbf Y_{\mathbf r, n}(\C)\isoto \mathfrak M_{\theta_0}(r, n\delta)(\C)$. We conjecture that this bijection can be extended to an isomorphism of schemes -- or at least, to extend the statement in \cite[Corollary 6.3]{CGGS}, of the underlying reduced schemes -- but we remain unable to prove this.
	
\end{remark}

\renewcommand{\bibname}{References}

\printbibliography
	\end{document}